%
%
%
%
%
\RequirePackage{fix-cm}
\documentclass[smallextended]{svjour3}       
\smartqed  
\usepackage{graphicx}
\usepackage{makeidx}
\usepackage{amssymb}
\usepackage{epsfig}
\usepackage{float}
\usepackage{enumerate}
\usepackage{amsmath}
\usepackage{amsfonts}

\def\N{\mathbb{N}}

%
%
%
%
%
\begin{document}

\title{Numerical schemes for the optimal input flow of a supply-chain
}


\author{Ciro D'Apice         \and
        Rosanna Manzo \and
        Benedetto Piccoli
}


\institute{C. D'Apice \at
              Department of Electronic and Computer Engineering, University of Salerno, Fisciano, Salerno, Italy \\
              Tel.: +39-089-962098\\
              Fax: +39-089-964218\\
              \email{cdapice@unisa.it}           
           \and
           R. Manzo \at
              Department of Electronic and Computer Engineering, University of Salerno, Fisciano, Salerno, Italy \\
              Tel.: +39-089-964262\\
              Fax: +39-089-964218\\
              \email{rmanzo@unisa.it}           
\and
           B. Piccoli \at
              Department of Mathematical Sciences, Rutgers University, Camden, NJ, US \\
              Tel.: +1-856-2256356\\
              Fax: +1-856-225\\
              \email{piccoli@camden.rutgers.edu}           
}

\date{Received: date / Accepted: date}

\maketitle

\begin{abstract}
An innovative numerical technique is presented to adjust the inflow to a
supply chain in order to achieve a desired outflow, reducing the costs of
inventory, or the goods timing in warehouses.\newline
The supply chain is modelled by a conservation law for the density of
processed parts coupled to an ODE for the queue buffer occupancy. The
control problem is stated as the minimization of a cost functional $J$
measuring the queue size and the quadratic difference between the outflow
and the expected one. The main novelty is the extensive use of generalized
tangent vectors to a piecewise constant control, which represent time shifts
of discontinuity points.\newline
Such method allows convergence results and error estimates for an
Upwind-Euler steepest descent algorithm, which is also tested by numerical
simulations.
\keywords{Supply chains\and  ODE-PDE models\and  Optimal control\and Upwind-Euler scheme\and  Tangent vectors}
\subclass{35L65\and 90B30\and 76N25\and 65M08}

\end{abstract}

\section{Introduction}
\label{intro}
The mathematical modeling of industrial production, as well as the
development of techniques for simulation and optimization purposes, is of
great interest in order to reduce unwanted phenomena (bottlenecks, dead
times at queues, and so on). Depending on the scale, one can distinguish
different modelling approaches, for instance discrete (Discrete Event
Simulations, \cite{F}) or continuous (Differential Equations, \cite{A2,A3,A4,H1,H2}). The latter class includes models based on partial
differential equations (\cite{BDMP,DM,DMP2,GHK1}). For a recent review
see \cite{DGHP}.

In this paper, we focus the attention on the continuous model for supply
chains proposed by G\"{o}ttlich, Herty and Klar in \cite{GHK1}, briefly GHK
model. A supply chain consists of processors with constant processing rate
and a queue in front of each processor. The dynamics of parts on a processor
is described by a conservation law, while the evolution of the queue buffer
occupancy is given by an ordinary differential equation. The latter is
simply determined by the difference of fluxes between the preceding and
following processors. The complete model consists of a coupled PDE-ODE
system.\newline
Various optimal problems, corresponding to different types of controls, have
been analysed for the GHK model (see \cite{DMP2011,GHK2,GHR,HK,KGHK}).
Typically one may consider the input flow to the whole supply chain as a
control as well as production rates and distribution coefficients in case of
supply networks. These papers provide a number of results and, in
particular, numerical algorithms to find the optimal control. In \cite{KGHK}
two discretization techniques are compared: one consisting in first writing
the adjoint system and then discretizing, while the second consists in
inverting the order, that is first discretizing and then writing the adjoint
system. All these methods compete with Discrete Events ones for numerical
accuracy and computational times, see also \cite{DGHP}.

In this paper we focus on the optimal control problem, where the control is
given by the input flow to the supply chain and the cost functional $J$ is
the sum of time-integral of queues and quadratic distance from a preassigned
desired outflow. In \cite{DMP2011}, piecewise constant controls are
considered together with generalized tangent vectors, which represent time
shifts of discontinuities of the control. The technique of such generalized
tangent vectors was extensively used for conservation laws, see \cite{BCP},
and for the case of network models, see \cite{GP,HKP}. The main result of
\cite{DMP2011} is the existence of optimal controls.

The aim of this work is to introduce an innovative numerical approach, which
builds up on the idea of generalized tangent vectors. Let us first explain
in rough words the core idea of the approach. A good numerical method, in
theory and practice, for an optimization problem is often based on a
suitable choice of \textquotedblleft perturbations". Then one can define
critical points, according to the chosen perturbations, and numerical
algorithms stemming from such definition.\newline
We base our method on perturbations of piecewise constant controls, obtained
by time shifting the discontinuity points. The advantage of this method is
twofold: on one side generalized tangent vectors are well suited for Wave
Front Tracking (briefly WFT, see \cite{B,GP}) algorithms, which allow
theoretical estimates on convergence rate. On the other side, generalized
tangent vectors have a particularly simple evolution in time, which can be
conveniently adapted to easily implementable methods as Upwind-Euler
(briefly UE, see \cite{CPR}). Moreover, in \cite{CPR} convergence of the UE
algorithm was proved by measuring the distance with WFT solutions by
generalized tangent vectors.\newline
The discretization of the evolution of generalized tangent vectors allows
the numerical computation of the cost functional gradient. This can be
combined with classical steepest descent or more advanced Newton methods for
the optimization procedure by iterations.\newline
Now, the results which complete the picture is the following. To estimate
the gradient of the cost functional we can use alternatively WFT or UE
algorithms for the evolution of generalized tangent vectors. Even more, we
can measure the distance among the two, by again using suitably defined
tangent vectors. This, in turn, allows to provide convergence results and
error estimates.\newline
Finally simulations are performed to show results for the proposed numerical
algorithm in some case studies.

The outline of the paper is the following. In Section \ref{sec:mm}, we
describe the GHK model and introduce the optimal control problem. The WFT
algorithm to construct approximate solutions to the model is illustrated in
Section \ref{sec:wft}, together with the definition and evolution of
generalized tangent vectors. Then Section \ref{sec:nm} describes the UE
numerical algorithm for solutions to the coupled ODE-PDE system and also
the numerics for generalized tangent vectors and cost functional derivative.
Convergence results and rates for our method are then given in Section \ref%
{sec:c}. Finally some numerical tests are discussed in Section \ref{sec:s}.

\section{An optimal control problem for supply chains}

\label{sec:mm}

A supply chain consists of consecutive suppliers. Each supplier is composed
of a processor for parts assembling and construction and a queue, located in
front of the processor, for unprocessed parts. Formally we have the
following definition.

\begin{definition}
A supply chain consists of a finite sequence of consecutive processors $%
I_{j} $, $j\in \mathcal{J}=\{1,\ldots ,P\}$ and queues in front of each
processor, except the first. Thus the supply chain is given by a graph $G=(V,%
\mathcal{J})$ with arcs representing processors and vertices, in $\mathcal{V}%
=\{1,\ldots ,P-1\}$, representing queues. Each processor is parametrized by
a bounded closed interval $I_{j}=[a_{j},b_{j}]$, with $%
b_{j-1}=a_{j},j=2,...,P$.
\end{definition}

The maximal processing rate $\mu _{j}$, and the processing velocity, given
by $v_{j}=L_{j}/T_{j}$ with $T_{j}$ and $L_{j}=b_{j}-a_{j}$ the processing
time and the length of the $j$-th processor, are constant parameters for
each arc. The dynamics of the $j$-th processor is given by a conservation
law
\begin{equation}
\partial _{t}\rho _{j}\left( x,t\right) +\partial _{x}\min \left\{ \mu
_{j},v_{j}\rho _{j}\left( x,t\right) \right\} =0,\quad \forall x\in \left[
a_{j},b_{j}\right] ,t\in \mathbb{R}^{+},  \label{legge}
\end{equation}%
\begin{equation*}
\rho _{j}\left( x,0\right) =\rho _{j,0}\left( x\right) ,\quad \rho
_{j}\left( a_{j},t\right) =\frac{f_{j,inc}(t)}{v_{j}},
\end{equation*}%
where $\rho _{j}\in \lbrack 0,\rho _{j}^{max}]$ is the unknown function,
representing the density of parts, while the initial datum $\rho _{j,0}$ and
the inflow $f_{j,inc}(t)$ are to be assigned. An input profile $u(t)$ on the
left boundary $\{(a_{1},t):t\in \mathbb{R}\}$ is given for the first arc
of the supply chain. Each queue buffer occupancy is modelled as a
time-dependent function $t\rightarrow q_{j}(t)$, satisfying the following
equation:
\begin{equation}
\dot{q}_{j}(t)=f_{j-1}\left( \rho _{j-1}\left( b_{j-1},t\right) \right)
-f_{j,inc},\quad j=2,...,P,  \label{legge_coda}
\end{equation}%
where the first term is defined by the trace of $\rho _{j-1}$ (which is
assumed to be of bounded variation on the $x$ variable), while the second is
defined by:
\begin{equation}
f_{j,inc}=\left\{
\begin{tabular}{ll}
$\min \left\{ f_{j-1}\left( \rho _{j-1}\left( b_{j-1},t\right) \right) ,\mu
_{j}\right\} $ & if $q_{j}\left( t\right) =0,$ \\
$\mu _{j}$ & if $q_{j}\left( t\right) >0.$%
\end{tabular}%
\right.  \label{eq:finc}
\end{equation}%
This allows for the following interpretation: If the outgoing buffer is
empty, we process as many parts as possible but at most $\mu _{j}.$ If the
buffering queue contains parts, then we process at the maximal possible
rate, namely again $\mu _{j}.$ Finally, the supply chain model is a coupled
system of partial and ordinary differential equations given by

\begin{equation}
\left\{
\begin{array}{ll}
\partial _{t}\rho _{j}\left( x,t\right) +\partial _{x}\min \left\{ \mu
_{j},v_{j}\rho _{j}\left( x,t\right) \right\} =0 & j=1,...,P, \\
\dot{q}_{j}(t)=f_{j-1}\left( \rho _{j-1}\left( b_{j-1},t\right) \right)
-f_{j,inc}(t) & j=2,...,P, \\
q_{j}\left( 0\right) =q_{j,0} & j=2,...,P, \\
\rho _{j}\left( x,0\right) =\rho _{j,0}\left( x\right) & j=1,...,P, \\
\rho _{j}\left( a_{j},t\right) =\frac{f_{j,inc}(t)}{v_{j}} & j=1,...,P, \\
f_{1,inc}(t)=u(t) &
\end{array}
\right.  \label{eq:model}
\end{equation}%
where $f_{j,inc}$ is given by (\ref{eq:finc}), for $j=2,...,P$.

Fixed a time horizon $[0,T]$, define the cost functional:

\begin{eqnarray}
J(u)& =&\sum_{j=2}^{P}\int_{0}^{T}\alpha _{1}(t)\,q_{j}(t)dt+\int_{0}^{T}\alpha
_{2}(t)\,\left[ v_{P}\cdot \rho _{P}(b_{P},t))-\psi (t)\right] ^{2}dt \notag \\
& \doteq & J_{1}(u)+J_{2}(u),  \label{func}
\end{eqnarray}%
where $\alpha _{1}\in L^{1}\big((0,T),[0,+\infty )\big)$, $\alpha _{2}\in
\text{Lip}\big((0,T),[0,+\infty )\big)$ (the space of Lipschitz continuous
functions) are weight functions, $(\rho _{j},q_{j})$ is the solution to (\ref%
{eq:model}) for the control $u$, $v_{P}\cdot \rho _{P}(b_{P},t))$ represents
the outflow of the supply chain (assuming the density level is below
$\mu _{P}$), while $\psi (t)\in \text{Lip}\big((0,T),[0,+\infty )%
\big)$ is a pre-assigned desired outflow. Given $C>0$, we consider the
minimization problem
\begin{equation}
\min_{u\in \mathcal{U}_{C}}J(u)  \label{min}
\end{equation}%
where $\mathcal{U}_{C}=\left\{ u:[0,T]\rightarrow \left[ 0,\mu _{1}\right]
\text{; }u\text{ \ measurable},\ T.V.(u)\leq C\right\} $ (with $T.V.$
indicating the total variation).
In other words, we want to minimize the queues
length and the distance between the exiting flow and the pre-assigned flow $%
\psi (t)$, using the supply chain input $u$ as control.

In \cite{DMP2011}, the existence of an optimal control was proved for a
general problem, which includes the case (\ref{eq:model})-(\ref{min}):

\begin{theorem}
\label{th:existence} (see \cite{DMP2011}) Consider the optimal control
problem (\ref{eq:model}), (\ref{min}). If $J$ is lower semicontinuous for
the $L^{1}$ norm, then there exists an optimal control.
\end{theorem}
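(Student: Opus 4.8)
The plan is to apply the direct method of the calculus of variations; the only substantial point is the compactness of the admissible set $\mathcal{U}_C$ in the $L^{1}$ topology. First I would observe that $J\ge 0$ on $\mathcal{U}_C$ (both $J_{1}$ and $J_{2}$ are integrals of non-negative integrands, using $q_{j}\ge 0$, $\alpha_{1},\alpha_{2}\ge 0$) and that $\mathcal{U}_C\ne\emptyset$ with $J$ finite on it (e.g.\ $u\equiv 0$ gives a finite value since all quantities are bounded on $[0,T]$), so $m:=\inf_{u\in\mathcal{U}_C}J(u)$ is a finite non-negative number. Then I would fix a minimizing sequence $u_{n}\in\mathcal{U}_C$ with $J(u_{n})\to m$.

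Next I would extract a convergent subsequence. Each $u_{n}$ takes values in the fixed compact interval $[0,\mu_{1}]$ and satisfies $T.V.(u_{n})\le C$, so by Helly's selection theorem there is a subsequence $u_{n_{k}}$ converging pointwise on $[0,T]$ to some $u^{*}$; since the $u_{n_{k}}$ are uniformly bounded, dominated convergence upgrades pointwise convergence to convergence in $L^{1}(0,T)$. It then remains to check that $u^{*}\in\mathcal{U}_C$: the pointwise limit inherits the bounds $0\le u^{*}\le\mu_{1}$ a.e., and total variation is lower semicontinuous with respect to pointwise (hence $L^{1}$) convergence, so $T.V.(u^{*})\le\liminf_{k}T.V.(u_{n_{k}})\le C$. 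Thus $\mathcal{U}_C$ is sequentially compact in $L^{1}$ and $u^{*}$ is admissible.

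Finally, the hypothesis that $J$ is lower semicontinuous for the $L^{1}$ norm yields $J(u^{*})\le\liminf_{k}J(u_{n_{k}})=m$, while $u^{*}\in\mathcal{U}_C$ forces $J(u^{*})\ge m$; hence $J(u^{*})=m$ and $u^{*}$ solves (\ref{min}). The step I expect to be the genuine obstacle is not in this abstract argument, which is routine once lower semicontinuity is granted, but rather in the verification of that hypothesis, i.e.\ in showing that the solution map $u\mapsto(\rho_{j},q_{j})$ of the coupled system (\ref{eq:model}), and therefore $u\mapsto J(u)$, is $L^{1}$-lower semicontinuous. This requires stability of the PDE-ODE system under $L^{1}$ perturbations of the boundary control, in particular control of the traces $\rho_{j-1}(b_{j-1},\cdot)$ feeding the queue equations (\ref{legge_coda})--(\ref{eq:finc}), and it is precisely here that the constraint $T.V.(u)\le C$ is essential. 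In this proof I would simply invoke the stability estimates of \cite{DMP2011}; an alternative, self-contained route would be to derive them from the wave-front tracking construction recalled in the next section together with the continuity of the queue dynamics in the fluxes.
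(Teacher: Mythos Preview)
The paper does not actually prove this theorem: it is stated with the attribution ``(see \cite{DMP2011})'' and no proof is given in the present paper, since the result is imported from \cite{DMP2011}. Your argument --- direct method with a minimizing sequence, Helly's selection theorem on the uniformly BV, uniformly bounded controls to obtain $L^{1}$-compactness of $\mathcal{U}_{C}$, lower semicontinuity of total variation to get admissibility of the limit, and finally the assumed $L^{1}$-lower semicontinuity of $J$ --- is correct and is the standard route; it is in all likelihood the same argument carried out in \cite{DMP2011}. Your closing remark that the substantive work lies in verifying the lower semicontinuity hypothesis via stability of the PDE--ODE solution map is also accurate and matches how the paper frames the role of \cite{DMP2011} and \cite{HKP}.
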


Our aim is now to provide a new approach to solve (\ref{eq:model})-(\ref{min}%
) numerically. The key idea is to focus on piecewise constant controls and
perturb the position of discontinuity points. The procedure corresponds to
define (generalized) tangent vectors to $u$ (in the spirit of \cite{BCP}).
We can then take advantage of the knowledge of time evolution of such
tangent vectors, developed in \cite{HKP}.\newline
More precisely, we start giving the following:

\begin{definition}
We indicate by $\widetilde{{\mathcal{U}}}\subset {\mathcal{U}}_{C}$ the set
of Piecewise Constant controls. For every $u\in \widetilde{{\mathcal{U}}}$
we indicate by $\tau _{k}=\tau _{k}(u)$, $k=1,\ldots ,\delta (u)$, the
discontinuity points of $u$.
\end{definition}

We are now ready to define the perturbation to a piecewise constant control:

\begin{definition}
Given $u\in\widetilde{{\mathcal{U}}}$, a tangent vector to $u$ is a vector $%
\xi =(\xi _{1},\ldots ,\xi_{\delta(u)})\in \mathbb{R}^{\delta(u)}$
representing shifts of discontinuities. The norm of the tangent vector is
defined as:
\begin{equation*}
\Vert \xi \Vert =\sum_{k=1}^{\delta(u)}|\xi _{k}|\cdot \left\vert u(\tau
_{k}+)-u(\tau _{k}-)\right\vert .
\end{equation*}
\end{definition}

Assume now for simplicity that $\tau _{1}>0$, $\tau _{\delta (u)}<T$, and
set $\tau _{0}=0$, $\xi _{0}=0$, $\tau _{\delta (u)+1}=T$, $\xi _{\delta
(u)+1}=0$. Then given a tangent vector $\xi $ to $u$, for every $\varepsilon
$ sufficiently small we define the infinitesimal displacement as:
\begin{equation}
u_{\varepsilon }=\sum_{k=0}^{\delta (u)}\chi _{\lbrack \tau _{k}+\varepsilon
\xi _{k},\tau _{k+1}+\varepsilon \xi _{k+1}[}u(\tau _{k}+),  \label{eq:vveps}
\end{equation}%
where $\chi $ is the indicator function. In other words $u_{\varepsilon }$
is obtained from $u$ by shifting the discontinuity points of $\varepsilon
\xi $ (see Figure \ref{Figure1}).
\begin{figure}[tbh]
\includegraphics[width=3in]{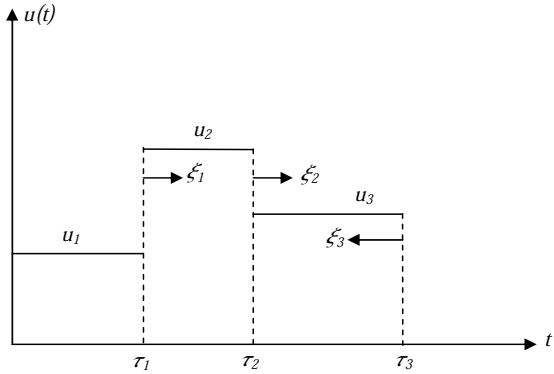}
\caption{Shifting of the discontinuity points of the control $u.$}
\label{Figure1}
\end{figure}

\begin{remark}
The notion of tangent vector to piecewise constant function was introduced
in \cite{B95} and used in \cite{BCP} to prove uniqueness and continuous
dependence of solutions to systems of conservation laws.\newline
More precisely, one defines a distance of Finsler type among piecewise
constant functions by considering paths which admit tangent vectors. Then,
by density and using the usual $L^{1}$ metric one can extend the metric to
the whole $L^{1}$.
Finally one may study the evolution of tangent vectors
and, in particular, estimates on their norms.\newline
This same technique was generalized to flows on networks, see \cite{GP}, and
to the GHK supply chain model, see \cite{HKP}.
\end{remark}

In next sections we are going to define numerical schemes for the solution
of (\ref{eq:model}) and for the evolution of tangent vectors. The latter, in
turn, will provide the information for the computation of numerical gradient
of the cost functional $J$.\newline
The evolution of tangent vectors is particularly clear for the theoretical
numerical scheme given by the WFT algorithm. This was used in
\cite{CPR} to prove convergence of an Upwind-Euler scheme and in \cite%
{DMP2011} to obtain the existence of an optimal control for (\ref{eq:model})-(%
\ref{min}). We thus recall briefly the WFT algorithm and the evolution of
tangent vectors along approximate solutions constructed via the WFT
algorithm.

\section{The Wave Front Tracking algorithm}

\label{sec:wft}

In this section we explain how to construct piecewise constant approximate
solutions to (\ref{eq:model}) by WFT method, see \cite{B} for details.

Given a discretization parameter $\sigma $ and initial conditions $\rho
_{j,0}$ in BV, the space of bounded variation functions, a WFT approximate
solution is constructed by a procedure sketched by the following steps:

\begin{itemize}
\item Approximate the initial datum by a piecewise constant function
(with discretization parameter $\sigma$) and
solve the Riemann Problems (RPs) corresponding to discontinuities of
the approximation. In RPs solutions approximate
rarefactions by rarefaction shocks of size $\sigma $;

\item Use the piecewise constant solution obtained piecing together the
solutions to RPs up to the first time of interaction of two shocks;

\item Then solve the new RP created by interaction of waves and prolong the
solution up to next interaction time, and so on.
\end{itemize}

To ensure the feasibility of such construction and the convergence of WFT
approximate solutions to a weak solution as $\sigma \rightarrow 0$, it is
enough to control the number of waves and interactions and the BV norm. This
is easily done in scalar case since both the number of waves and the BV norm
are decreasing in time (see \cite{B} for details).\newline
For our system, we need also to approximate the queue evolution. For this we
compute the exact solutions to (\ref{legge_coda}) (see \cite{HKP} for BV
estimates for the complete PDE-ODE model (\ref{eq:model})).

Notice that, as soon as a boundary datum will achieve a value below $\mu
_{j} $, then in finite time all values above $\mu _{j}$ will disappear from
the $j $-th processor, see also \cite{HKP}. Therefore, for simplicity, we
will assume

\begin{itemize}
\item[(H1)] $\rho _{j,0}(x)\leq \mu _{j}$ for all $j\in\mathcal{J}$.
\end{itemize}

Then the same inequality will be satisfied for all times. In this case
solutions to RPs are particularly simple, indeed the conservation law is
linear, thus given some Riemann data $(\rho _{-},\rho _{+})$ on the $j$-th
processor, the solution is always given by a shock travelling with velocity $%
v_{j}$.

\subsection{Tangent vectors evolution}

\label{sec:tv}

The infinitesimal displacement of each discontinuity of the control $u$
produces changes in the whole supply chain, whose effects are visible both
on processors and on queues. In fact, every shift $\xi $ generates shifts on
the densities and shifts on the queues, which spread along the whole supply
chain.\newline
Since the control $u$ is piecewise constant, the solution $(\rho _{j},q_{j})$
to (\ref{eq:model}) is such that $\rho _{j}$ is piecewise constant and $%
q_{j} $ is piecewise linear. A tangent vector to the solution $(\rho
_{j},q_{j})$ is given by:
\begin{equation*}
(^{\beta }\xi _{j},\eta _{j}),
\end{equation*}%
where $\beta$ runs over the set of discontinuities of $\rho_j$, $^{\beta
}\xi _{j}$ are the shifts of the discontinuities, while $\eta _{j}$ is the
shift of the queue buffer occupancy $q_{j} $. The norm of a tangent vector
is given by:
\begin{equation}  \label{eq:norm-tv}
\Vert (^{\beta }\xi _{j},\eta _{j})\Vert =\sum_{\beta}|^{\beta }\xi
_{j}||\Delta ^{\beta }\rho _{j}|+\sum_{j}|\eta _{j}|,
\end{equation}
where $\Delta ^{\beta }\rho _{j}=\,^\beta\rho^j_l-\,^\beta\rho^j_r$ is the
jump in $\rho$ of the discontinuity $\beta$ (where $^\beta\rho^j_l$,
respectively $^\beta\rho^j_r$, is the value on the left, respectively right,
of the discontinuity $\beta$).\newline
Notice that this is compatible with the definition of norm of tangent vector
to a control. Because of assumption (H1), we have no wave interaction inside
the processors. Therefore, densities and queues shifts remain constant for
almost all times and change only at those times at which one of the
following interactions occurs:

\begin{description}
\item[a)] \textrm{interaction of a density wave with a queue};

\item[b)] \textrm{emptying of the queue.}
\end{description}

We now provide formulas for such changes. Assume a wave with shift $%
^\beta\xi_{j-1}$ interact with the $j$-th queue and let $\bar{t}$ be the
interaction time. We use the letters $+$ and $-$ to indicate quantities
before and after $\bar{t}$, respectively. So, we indicate with $\rho
_{j}^{-} $ and $\rho _{j}^{+}$ the densities on the processor $I_{j}$ before
and after an interaction occurs and similarly for $I_{j-1}$. Also we use $%
^{\beta }\xi _{j}$ to denote the shift on the processor $I_{j}$ and with $%
^{\beta }\eta _{j}^{-}$ and $^{\beta }\eta _{j}^{+}$ the shifts on the queue
$q_{j}$, respectively before and after the interaction. Consider the case
\textbf{a)} and distinguish two subcases:

\begin{description}
\item[a.1)] $q_{j}(\bar{t})=0$;

\item[a.2)] $q_{j}(\bar{t})>0$.
\end{description}

In case \textbf{a.1)} we have to further distinguish two subcases:

\begin{description}
\item[a.1.1)] if $v_{j-1}\rho _{j-1}^{+}<v_{j-1}\rho _{j-1}^{-}<\mu _{j}$,
then $^{\beta }\xi _{j}=\frac{v_{j}}{v_{j-1}} ^\beta\xi _{j-1}$ and $^{\beta
}\eta_{j}^{-}=0=~^{\beta }\eta _{j}^{+}$ ;

\item[a.1.2)] if $v_{j-1}\rho _{j-1}^{+}>\mu _{j}$, then $^{\beta }\xi _{j}=%
\frac{v_{j}}{v_{j-1}} ^\beta\xi _{j-1}$ and $^{\beta }\eta_{j}^{+}=\
^\beta\xi_{j-1} \frac{(v_{j-1}\rho _{j-1}^{+}-\mu _{j})}{v_{j-1}} +\ ^{\beta
}\eta_{j}^{-}.$
\end{description}

In case \textbf{a.2)} we have: $^{\beta }\xi _{j}=0$, $^{\beta }\eta
_{j}^{+}=~^{\beta }\xi _{j-1}(\rho _{j-1}^{-}-\rho _{j-1}^{+})+\ ^{\beta
}\eta_{j}^{-}.$\newline
Finally in case \textbf{b)\ }we get:\textbf{\ }$^{\beta }\eta _{j}^{+}=0$, $%
^{\beta }\xi _{j-1}=0$ and $^{\beta }\xi _{j}=-\frac{v_j\ ^{\beta }\eta
_{j}^{-}}{(v_{j-1}\rho _{j-1}^{-}-\mu _{j})}.$

Using the above notations, indicate by $^{\beta }\xi _{P}$ the shift to a
generic discontinuity of $\rho _{P}$ and by $^{\beta }\rho _{P}^{+}$,
respectively $^{\beta }\rho _{P}^{-}$, the value of $\rho _{P}$ on the
right, respectively left, of the discontinuity. The following holds:

\begin{proposition}
\label{prop:Y1-Y2} Consider a control $u\in \widetilde{{\mathcal{U}}}$ and a
tangent vector $\xi \in \mathbb{R}^{\delta (u)}$ to $u$. The gradient of the
cost functional $J$ with respect to $\xi $ is given by:%
\begin{eqnarray}
\nabla _{\xi }J(u)& =&\sum_{j}\int_{0}^{T}\alpha _{1}(t)\eta
_{j}(t)dt+
\sum_{\beta }\alpha _{2}(t^{\beta })\,v_{P}\left( ^{\beta }\rho
_{P}^{+}+\,^{\beta }\rho _{P}^{-}-2\psi (t^{\beta })\right) \,^{\beta }{\xi }%
_{P}\,\Delta (\,^{\beta }\rho _{P})  \notag \\
&\doteq &Y_{1}^{WFT}+Y_{2}^{WFT},  \label{eq:Y1-Y2}
\end{eqnarray}%
where $\Delta (\,^{\beta }\rho _{P})=\,^{\beta }\rho _{P}^{+}-\,^{\beta
}\rho _{P}^{-}$ and $t^{\beta }$ is the interaction time of the
discontinuity indexed by $\beta $ with $b_{P}$, the right extreme of the
supply chain.
\end{proposition}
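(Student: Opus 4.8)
\emph{Proof plan.} The quantity $\nabla_\xi J(u)$ is, by definition, the one-sided directional derivative $\tfrac{d}{d\varepsilon}\big|_{\varepsilon=0^+}J(u_\varepsilon)$ along the infinitesimal displacement $u_\varepsilon$ of (\ref{eq:vveps}); for $\varepsilon$ small one has $u_\varepsilon\in\mathcal{U}_C$, since shifting interior discontinuity points of a piecewise constant function leaves its total variation unchanged. The first and main step is to describe the perturbed WFT solution $(\rho_j^\varepsilon,q_j^\varepsilon)$ generated by $u_\varepsilon$. Under hypothesis (H1) the equations (\ref{legge}) are linear on each processor, so there are no rarefactions and no interactions inside the processors; the only events in $[0,T]$ are interactions of a density wave with a queue and emptyings of queues, and these are finite in number and occur at times depending affinely on $\varepsilon$ to leading order. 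Consequently, for $\varepsilon$ small, $(\rho_j^\varepsilon,q_j^\varepsilon)$ is obtained from $(\rho_j,q_j)$ by keeping the same density values and the same queue slopes and merely \emph{translating} the density discontinuities and the queue breakpoints; the first-order translations are precisely the tangent-vector components $({}^{\beta}\xi_j,\eta_j)$, propagated through the junctions and the emptying times by the rules \textbf{a.1.1)}--\textbf{b)} recalled above. (Possible non-generic coincidences --- two waves reaching a queue simultaneously, or a wave reaching a queue exactly at an emptying time --- occur only for finitely many $\varepsilon$ and are disposed of by the standard WFT perturbation argument, or excluded a priori.) In particular one gets $q_j^\varepsilon(t)=q_j(t)+\varepsilon\,\eta_j(t)+O(\varepsilon^2)$ uniformly on $[0,T]$, with $\eta_j$ bounded and piecewise constant, while $\rho_P^\varepsilon(b_P,\cdot)$ is piecewise constant with the same values as $\rho_P(b_P,\cdot)$ but with the jump time $t^\beta$ moved to $t^\beta+\varepsilon\,\zeta^\beta+O(\varepsilon^2)$, where $\zeta^\beta$ is the time-shift of the discontinuity $\beta$ at $x=b_P$, equal to $\,{}^{\beta}\xi_P$ divided by the velocity $v_P$ (up to sign).

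\emph{Differentiation of $J_1$.} Since $\alpha_1\in L^1\big((0,T)\big)$ and $\big|q_j^\varepsilon-q_j-\varepsilon\eta_j\big|\le C\varepsilon^2$ uniformly in $t$, dividing by $\varepsilon$ and letting $\varepsilon\to0^+$ (dominated convergence) gives $\tfrac{d}{d\varepsilon}\big|_0 J_1(u_\varepsilon)=\sum_{j=2}^{P}\int_0^T\alpha_1(t)\,\eta_j(t)\,dt=Y_1^{WFT}$.

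\emph{Differentiation of $J_2$.} Write $G(t)=v_P\rho_P(b_P,t)$ and $G_\varepsilon(t)=v_P\rho_P^\varepsilon(b_P,t)$; these piecewise constant functions take the same values, and $G_\varepsilon$ has its jump at $t^\beta$ displaced by $\varepsilon\zeta^\beta+O(\varepsilon^2)$. Hence $[G_\varepsilon-\psi]^2$ and $[G-\psi]^2$ coincide off the finitely many intervals of length $O(\varepsilon)$ delimited by $t^\beta$ and $t^\beta+\varepsilon\zeta^\beta$, on which they differ by the difference of the two squared values; since $\alpha_2$ and $\psi$ are Lipschitz, they are constant there up to $O(\varepsilon)$. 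Collecting these contributions and factoring $a^2-b^2=(a-b)(a+b)$, with $a,b$ the two one-sided outflow values at $t^\beta$, gives
\[
\tfrac{d}{d\varepsilon}\Big|_{0}J_2(u_\varepsilon)=\sum_\beta \zeta^\beta\,\alpha_2(t^\beta)\,\big(G_\ell^\beta-G_r^\beta\big)\big(G_\ell^\beta+G_r^\beta-2\psi(t^\beta)\big),
\]
where $G_\ell^\beta$ and $G_r^\beta$ are the values of $G$ just before and just after $t^\beta$. Identifying $G_\ell^\beta-G_r^\beta=\pm v_P\,\Delta(\,{}^{\beta}\rho_P)$ and $G_\ell^\beta+G_r^\beta=v_P\big({}^{\beta}\rho_P^{+}+{}^{\beta}\rho_P^{-}\big)$, and substituting $\zeta^\beta$ in terms of $\,{}^{\beta}\xi_P$ and $v_P$, the sum collapses --- after the routine bookkeeping of signs and velocity factors --- to the expression $Y_2^{WFT}$ displayed in (\ref{eq:Y1-Y2}). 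Adding the two derivatives yields (\ref{eq:Y1-Y2}).

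\emph{Where the difficulty lies.} The conceptual heart of the argument is the first step: the claim that, for $\varepsilon$ small, the perturbed solution is \emph{exactly} the solution with translated waves, and that the leading-order translations obey the interaction rules \textbf{a.1.1)}--\textbf{b)}. This rests on the BV estimates and the tangent-vector calculus for the coupled PDE--ODE model established in \cite{HKP,DMP2011}, on the finiteness and affine $\varepsilon$-dependence of the interaction and emptying times, and on a short argument ruling out non-generic coincidences. Once this structural fact is granted, the two derivatives are elementary first-order expansions; the only point requiring care in $J_2$ is that the first-order variation is concentrated near the finitely many trace-jump times $t^\beta$, where Lipschitz continuity of $\alpha_2$ and $\psi$ controls the error.
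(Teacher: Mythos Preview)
Your approach is essentially the same as the paper's: compute $\lim_{\varepsilon\to 0}(J(u_\varepsilon)-J(u))/\varepsilon$ by recognizing that the perturbed WFT solution differs from the unperturbed one only by $O(\varepsilon)$ translations of the waves and queue breakpoints, then read off the first-order variations of $J_1$ and $J_2$ separately. The paper's own proof is in fact a three-line sketch that simply writes down the two infinitesimal changes (\ref{eq:J1}) and (\ref{eq:J3new}) without further justification; your version supplies the supporting structure (dominated convergence for $J_1$, localization near the jump times $t^\beta$ for $J_2$, the role of (H1) and of the rules \textbf{a.1.1)}--\textbf{b)} in propagating the shifts) that the paper leaves implicit.

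One point worth checking carefully when you finalize the $Y_2$ computation: converting the spatial shift ${}^\beta\xi_P$ to the time shift $\zeta^\beta$ at $x=b_P$ introduces a factor $1/v_P$, while $G_\ell-G_r$ and $G_\ell+G_r$ each carry a factor $v_P$. Tracking these through, the middle factor should come out as $v_P({}^\beta\rho_P^{+}+{}^\beta\rho_P^{-})-2\psi(t^\beta)$ rather than $v_P\big({}^\beta\rho_P^{+}+{}^\beta\rho_P^{-}-2\psi(t^\beta)\big)$; the paper's displayed formula appears to group the $v_P$ differently, so you may want to reconcile your bookkeeping with the statement rather than leave it as ``routine.'' The paper also normalizes by $|\xi|$ in its definition of $\nabla_\xi J$, which you omit; this does not affect the derivation but is worth matching.
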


\begin{proof}
We have $\nabla _{\xi }J(u)=\lim_{{\varepsilon\rightarrow 0}}\frac{J(u_{\varepsilon
})-J(u)}{\varepsilon |\xi |}$. By definition of the functional $J$, the
infinitesimal change $Y_{1}^{WFT}$ in $J_{1}$ due to the infinitesimal
displacement $\varepsilon \xi $ of the control $u$ is
\begin{equation}
Y_{1}^{WFT}=\varepsilon \sum_{j}\int_{0}^{t}\alpha _{1}(t^{\beta })\eta
_{j}(t)dt,  \label{eq:J1}
\end{equation}%
while the infinitesimal change $Y_{2}^{WFT}$ in $J_{2}$ is
\begin{equation}
Y_{2}^{WFT}=\varepsilon \sum_{\beta }\alpha _{2}(t^{\beta })\,v_{P}\left[
\left( ^{\beta }\rho _{P}^{+}\right) ^{2}-\left( ^{\beta }\rho
_{P}^{-}\right) ^{2}-2\psi (t^{\beta })\left( ^{\beta }\rho
_{P}^{+}-\,^{\beta }\rho _{P}^{-}\right) \right] \,^{\beta }{\xi }_{P},
\label{eq:J3new}
\end{equation}%
thus we conclude.
\end{proof}

\section{Steepest descent for the Upwind-Euler scheme}

\label{sec:nm}

In this section we introduce first an Upwind-Euler scheme for the system (%
\ref{eq:model}) and then a numerical scheme for the evolution of the tangent
vectors to a solution to the PDE-ODE model. From the latter we will be able
to compute numerically the derivative of the cost functional with respect to
the discontinuities of the input flow. This, in turn, will be used in
steepest descent methods to find the optimal control.\newline
For a general introduction to numerical schemes for conservation laws we
refer to \cite{L}, while for optimization algorithms to \cite{Bonnans}.

For simplicity we assume:

\begin{itemize}
\item[(H2)] The lengths $L_j$ are rationally dependent.
\end{itemize}

Assumption (H2) allows us to use a unique space mesh for all processors $%
I_{j}$, $j=1,\ldots ,P$. Indeed there exists $\Delta $ so that all $L_{j}$
are multiple of $\Delta $ and we will always use time and space meshes
dividing $\Delta$.

\begin{remark}
It is possible to choose different space and/or time grid meshes for
different processors. This is necessary in the general case in which the
lengths of arcs have not rational ratios. Details can be found in \cite{CPR}.
\end{remark}

In next section we report briefly the Upwind-Euler method, analysed in \cite%
{CPR} to construct numerical solutions to the supply chain model (\ref%
{eq:model}).

\subsection{Upwind-Euler scheme for supply chains}

\label{sec:al}

Given a space mesh $\Delta x$, for each processor $I_j$, we set $\Delta
t_j=\Delta x/v_j$ and define a numerical grid of $\left[ 0,L_j\right] \times %
\left[ 0,T\right] $ by:

\begin{itemize}
\item $\left( x_{i},t^{n}\right) _{j}=\left( i\Delta x,\text{ }n\Delta
t_j\right) $, $i=0,...,N_{j},$ $n=0,...,M_{j}$ are the grid points;

\item $^{j}\rho _{i}^{n}$ is the value taken by the approximated density at
the point $\left( x_{i},t^{n}\right)_j$;

\item $q_{j}^{n}$ is the value taken by the approximate queue buffer
occupancy at time $t^n$.
\end{itemize}

The Upwind method reads:
\begin{equation}
\text{ }^{j}\rho _{i}^{n+1}=\text{ }^{j}\rho _{i}^{n}-\frac{\Delta t_{j}}{%
\Delta x}v_{j}\left( \text{ }^{j}\rho _{i}^{n}-\text{ }^{j}\rho
_{i-1}^{n}\right) =\text{ }^{j}\rho _{i-1}^{n},  \label{Upwind}
\end{equation}%
where $j\in \mathcal{J}$, $i=0,...,N_{j}$ and $n=0,...,M_{j}$. Notice that
the CFL condition is given by $\Delta t_{j}\leq \frac{\Delta x}{v_{j}},$ and
thus holds true. The explicit Euler method is given by:
\begin{equation}
q_{j}^{n+1}=q_{j}^{n}+\Delta t_{j}\left( f_{j-1}^{n}-f_{j,inc}^{n}\right) ,%
\text{ }j\in \mathcal{J}\backslash \left\{ 1\right\} ,\text{ }n=0,...,M_{j},
\label{Euler}
\end{equation}%
where $f_{j-1}^{n}$ needs to be defined and
\begin{equation}
f_{j,inc}^{n}=\left\{
\begin{tabular}{ll}
min$\left\{ f_{j-1}(^{j-1}\rho _{N_{j-1}}^{n}),\mu _{j}\right\} $ & $%
q_{j}^{n}\left( t\right) =0,$ \\
$\mu _{j}$ & $q_{j}^{n}\left( t\right) >0.$%
\end{tabular}%
\ \ \right.  \label{eq:GHK5-num}
\end{equation}%
Now, if $\Delta t_{j-1}\leq \Delta t_{j}$ we set:
\begin{equation}
f_{j-1}^{n}=\sum_{l=1}^{M\left( n\right) -m\left( n\right) -1}\Delta
t_{j-1}f_{j-1}(^{j-1}\rho _{N_{j-1}}^{m\left( n\right)
+l})=\sum_{l=1}^{\gamma }\Delta t_{j-1}f_{j-1}(^{j-1}\rho _{N_{j-1}}^{\gamma
n+l}),  \label{eq:fout1}
\end{equation}%
where $m\left( n\right) $ and $M\left( n\right) $ are defined as:%
\begin{equation*}
m\left( n\right) =\sup \left\{ m:m\Delta t_{j-1}\leq n\Delta t_{j}\right\} ,
\end{equation*}%
\begin{equation*}
M\left( n\right) =\inf \left\{ M:M\Delta t_{j-1}\geq \left( n+1\right)
\Delta t_{j}\right\} .
\end{equation*}%
Otherwise, that is if $\Delta t_{j-1}>\Delta t_{j}$, we set:
\begin{equation}
f_{j-1}^{n}=f_{j-1}\left( \,^{j-1}\rho _{N_{j-1}}^{\lfloor \frac{n\Delta
t_{j}}{\Delta t_{j-1}}\rfloor }\right) ,  \label{eq:fout2}
\end{equation}%
where \ $\lfloor \cdot \rfloor $ indicates the floor function. Boundary data
are treated using ghost cells and the expression of inflows given by (\ref%
{eq:GHK5-num}). The convergence of the scheme has been proved in \cite{CPR}\
using a comparison with WFT approximate solutions.

\subsection{Numerics for tangent vectors and cost functional}

We first completely discretize the control space via the time mesh $\Delta t$%
:
\begin{equation}
\widetilde{{\mathcal{U}}}_{\Delta t}=\{u\in\widetilde{{\mathcal{U}}}:
\tau_k(u)=n(u,k)\,\Delta t, \ n(k,u)\in\N,\ k=1,\ldots,\delta(u)\}.
\end{equation}
Now for every $u\in\widetilde{{\mathcal{U}}}_{\Delta t}$ we consider shifts $%
\xi$ so that the obtained time-shifted control is still in $\widetilde{{%
\mathcal{U}}}_{\Delta t}$. Then every $\xi_k$ is necessarily a multiple of $%
\Delta t$. Hence from now on we will restrict to the case:
\begin{equation}
\xi_k=\pm\Delta t,\quad k=1,\ldots,\delta(u).
\end{equation}

For a generic processor $I_j$ and a discontinuity point $\tau _{k}$ of the
control, we denote by $^{k,j}{\xi _{~i}^{~n}}$ and $^{k,j}{\eta ^{~n}}$ the
approximations of $^{k}\xi _{j}(x_{i},t^{n})$, and $^{k}\eta _{j}(t^{n})$,
respectively. We define such approximations by a recursive procedure
explained in the following.\newline
We initialize the tangent vector approximations by setting:
\begin{eqnarray}  \label{eq:tv-ini}
^{k,j}{\xi _{i}^{n}}~&=&0,\quad \text{for}\ n=1,\ldots ,n(k-1,u),\
j=1,\ldots ,P,  \notag \\
^{k,1}{\xi _{1}^{n(k,u)}}~&=&v_1\,(\pm\Delta t), \\
~^{k,j}{\eta^{0}}&=&0, \quad j=1,\ldots ,P.  \notag
\end{eqnarray}
The definition of $^{k,1}{\xi _{1}^{n(k,u)}}$ reflects the fact that the
shift $\xi_k$ provokes a shift of the wave generated on the first processor.%
\newline
Now, the evolution of approximations of tangent vectors to $\rho$ inside
processors is simply given by:
\begin{equation*}
^{k,j}\xi _{i}^{n+1}~=~^{k,j}\xi _{i-1}^{n}.
\end{equation*}%
On the other side, the approximation of $\xi$ and $\eta$ influence each
other at interaction times with queues. More precisely, we consider the four
cases described in Section \ref{sec:tv} and get:

\begin{description}
\item[a.1.1):] $^{k,j}{\eta }^{n+1}=0,~^{k,j}\xi _{1}^{n+1}= \frac{v_j}{%
v_{j-1}}~^{k,j-1}\xi_{N_{j-1}}^{n};$

\item[a.1.2):] $^{k,j}\xi _{1}^{n+1}=\frac{v_j}{v_{j-1}}~^{k,j-1}\xi
_{N_{j-1}}^{n}$, $^{k,j}\eta^{n+1}=~^{k,j-1}\xi _{N_{j-1}}^{n} \frac{%
\left(v_{j-1}\ ^{j-1}\rho _{N_{j-1}}^{n+1}-\mu_{j}\right)}{v_{j-1}} +\
^{k,j}\eta^{n};$

\item[a.2):] $^{k,j}\xi _{1}^{n+1}=0,~^{k,j}\eta ^{n+1}=~^{k,j-1}\xi
_{N_{j-1}}^{n}\left( ^{j-1}\rho _{N_{j-1}}^{n+1}-~^{j-1}\rho
_{N_{j-1}}^{n}\right) +~^{k,j}\eta ^{n};$

\item[b):] $^{k,j-1}\xi _{N_{j-1}}^{n}=0$, $^{k,j}\eta ^{n+1}=0$, $^{k,j}\xi
_{1}^{n+1}=-\frac{v_j\ ^{k,j}\eta ^{n}}{v_{j-1}\ ^{j-1}\rho
_{N_{j-1}}^{n}-\mu _{j}}.$
\end{description}

Notice that this is compatible with the evolution of tangent vectors along
WFT approximate solutions and also the norm of approximate tangent vectors,
in the sense of (\ref{eq:norm-tv}), is conserved.

Now we are ready to compute numerical approximations for $\nabla_\xi J$. We
denote by $^{k,j}Y_{1}^{n}$, respectively $^{k}Y_{2}^{n}$, the numerical
approximations of the $k$-th component of $Y_1^{WFT}$, respectively $%
Y_2^{WFT}$, as defined in (\ref{eq:Y1-Y2}) on processor $I_j$ at time $t^{n}$%
.\newline
We initialize such approximation by setting:
\begin{equation}
^{k,j}Y_{1}^{0}=0,\ ^{k}Y_{2}^{0}=0,\quad j=1,\ldots ,P,\ k=1,\dots
,\delta(u).
\end{equation}
Now the evolution is determined by the following simple rules. For $Y_1$, if
$q_{j}^{n+1}>0,$ then we set
\begin{equation*}
^{k,j}Y_{1}^{n+1}=~^{k,j}Y_{1}^{n}+\alpha_1(t^n)\ ^{k,j}\eta ^{n}\triangle t,
\end{equation*}
while for $q_{j}^{n+1}=0$ we distinguish two subcases:

\begin{itemize}
\item if $q_{j}^{n}=0$, then $^{k,j}Y_{1}^{n+1}=\ ^{k,j}Y_{1}^{n}$;

\item if $q_{j}^{n}>0$, then $^{k,j}Y_{1}^{n+1}=\text{ }^{k,j}Y_{1}^{n}+%
\frac{1}{2}~ \alpha_1(t^n)\,^{k,j}\xi_{1}^{n+1}~^{k,j}\eta ^{n}$.
\end{itemize}

For $Y_{2}$ we set:
\begin{equation*}
^{k}Y_{2}^{n+1}=~^{k}Y_{2}^{n}+\alpha _{2}(t)\,v_{P}\left( \left( ^{P}\rho
_{N_{P}}^{n}-\psi (t^{n})\right) ^{2}-\left( ^{P}\rho _{N_{P}}^{n+1}-\psi
(t^{n})\right) ^{2}\right) ~^{k,P}\xi _{N_{P}}^{n}.
\end{equation*}

A steepest descent algorithm, denoting whit $\vartheta $ the iteration step,
is defined by setting
\begin{equation*}
\tau _{k}^{\vartheta +1}=\tau _{k}^{\vartheta }+\lfloor \frac{h_{\theta
}\,\left( \sum_{j}\sum_{n}\text{ }^{k,j}Y_{1}^{n}+\sum_{n}\text{ }%
^{k}Y_{2}^{n}\right) }{\Delta t}\rfloor \Delta t,
\end{equation*}%
where $h_{\theta }$ is a coefficient to be suitably chosen. More precisely
the parameter $h_{\theta }$ may be chosen to solve an optimization problem
to get specific schemes, see \cite{Bonnans}.

\section{Convergence and error estimates}

\label{sec:c}

In this section we will provide convergence results and error estimates for
the Upwind-Euler steepest descent scheme illustrated in Section \ref{sec:nm}.
We will make use of two natural parameters $\nu \in \N$ and
$\theta \in \N$, the first referring to the Upwind-Euler (and WFT)
discretization, while the second indicating the iterative step of the
steepest descent algorithm.

We fix an initial space mesh $\Delta x$ (of which $\Delta$ is a multiple,
see (H2)) and define $\Delta x_{\nu }=2^{-\nu }\Delta x$. On each processor $%
I_j$ the time mesh is set as:
\begin{equation*}
\Delta t_{j,\nu }= \frac{\Delta x_{\nu }}{v_{j}},
\end{equation*}
thus granting the CFL condition. Obviously $\Delta t_{j,\nu }\rightarrow 0$,
$\Delta x_{\nu }\rightarrow 0$ as $\nu $ tends to $+\infty$.

The initial datum $\rho _{j,0}$ (see (\ref{eq:model})) is sampled in the
following way:
\begin{equation}
^{\nu ,j}\rho _{i}^{0}=\rho _{j,0}\big((a_{j}+i\,2^{-\nu }\Delta x)\,+\big).
\label{eq:s}
\end{equation}%
A control function $u_{\nu ,\theta }$ will be defined by the iteration step
of the steepest descent algorithm starting from a fixed $u_{\nu ,0}\in
\widetilde{{\mathcal{U}}}_{\Delta t_{1,\nu }}$. We will denote the
discontinuity points of $u_{\nu ,\theta }$ by $\tau _{k}^{\nu ,\theta }$,
for $k=1,\ldots ,\delta (u_{\nu ,\theta })$.\\
Notice approximations can be constructed in such a way that
$\delta(u_{\nu ,\theta })\rightarrow +\infty $ as $\nu \rightarrow +\infty $.

For simplicity $^{\nu,\theta}(\rho,q)^{UE}$ will denote the numerical
solution generated by the Upwind-Euler scheme, i.e. the collection $%
^{\nu,\theta}(^j\rho^n_i,q_j^n)$ for $j=1,\ldots ,P$, $n=1,\ldots , M^\nu_j$%
, $i=1,\ldots , N^\nu_j$. Similarly $^{\nu,\theta}(\xi,\eta)^{UE}$ will
denote the numerical tangent vectors computed as in Section \ref{sec:nm},
i.e. the collection $^{\nu,\theta}(\,^{k,j}\xi^n_i,\,^{k,j}\eta^n)$ for $%
j=1,\ldots ,P$, $n=1,\ldots , M^\nu_j$, $i=1,\ldots , N^\nu_j$, $k=1,\ldots
, \delta(u_{\nu,\theta})$.\newline
We will also use the symbols $^{\nu,\theta}(\rho,q)^{WFT}$, respectively $%
^{\nu,\theta}(\xi,\eta)^{WFT}$, to indicate the solutions, respectively
tangent vectors, produced by the Wave Front Tracking algorithm.

Now define:
\begin{equation}  \label{eq:PC}
\pi _{PC}\left( ^{\nu ,j}\rho ^{n}\right) =\sum_{i=0}^{L_{j}/2^{-\nu }\Delta
x_{j}-1}\,^{\nu ,j}\rho _{i}^{n}\,\chi _{\lbrack a_{j}+i\,2^{-\nu }\Delta
x_{j},a_{j}+(i+1)2^{-\nu }\Delta x_{j}[},
\end{equation}
and
\begin{equation*}
\pi _{PL}\left( ^{\nu }q_{j}\right) (t)=\,^{\nu }q_{j}^{n}+(t-n\Delta
t_{j,\nu })(^{\nu }q_{j}^{n+1}-\,^{\nu }q_{j}^{n})\qquad {\text{f}}\text{{or}%
}{\ }t\in \lbrack n\Delta t_{j,\nu },(n+1)\Delta t_{j,\nu }[.
\end{equation*}%
Notice that $\pi _{PC}$, respectively $\pi _{PL}$, are operators taking
values on the space of piecewise continuous, respectively piecewise linear,
functions.

In \cite{CPR} it was proved the following:

\begin{theorem}
\label{th:conv} Assume that (H1), (H2), (\ref{eq:s}) hold true and that $%
\rho _{j,0}$ are of bounded variation. Then $^{\nu ,\theta }(\rho ,q)^{WFT}$
is approximated by $^{\nu ,\theta }(\pi _{PC}(\rho ),\pi _{PL}(q))^{UE}$,
more precisely:
\begin{align}
& \Vert ^{\nu ,\theta }\rho ^{WFT}(t)-\ ^{\nu ,\theta }(\pi _{PC}(\rho
^{UE}))(t)\Vert _{L^{1}}+\Vert ^{\nu ,\theta }q^{WFT}(t)-\ ^{\nu ,\theta
}(\pi _{PL}(q^{UE}))(t)\Vert  \notag \\
& \leq 2^{-\nu }\ K\big(T.V.(\rho _{j,0}),\mu _{j}\big),  \label{eq:conv}
\end{align}
where $K$ is a suitable constant depending only on the total variation of $%
\rho _{j,0}$ and the values $\mu _{j}$, $j=1,\ldots ,P$.\newline
Moreover $^{\nu ,\theta }(\pi _{PC}(\rho ),\pi _{PL}(q))^{UE}$ converges to
a solution of (\ref{eq:model}) with a convergence rate as in (\ref{eq:conv}).
\end{theorem}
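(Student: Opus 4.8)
The plan is to reduce the estimate \eqref{eq:conv} to a processor-by-processor comparison, exploiting the fact that under (H1) the conservation law on each $I_j$ is linear. First I would recall from the WFT construction that, because of (H1), every Riemann problem on $I_j$ is resolved by a single shock moving with speed exactly $v_j$, so the WFT solution $^{\nu,\theta}\rho_j^{WFT}$ is nothing but the initial datum translated rigidly at speed $v_j$, with new discontinuities entering from the left boundary according to $f_{j,inc}/v_j$. On the numerical side, the Upwind update \eqref{Upwind} with $\Delta t_j=\Delta x_\nu/v_j$ collapses to the pure shift $^{j}\rho_i^{n+1}={}^{j}\rho_{i-1}^{n}$, i.e. the scheme is \emph{exact} for the translation — no numerical diffusion is introduced. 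Hence the only discrepancy between $^{\nu,\theta}\rho^{WFT}$ and $\pi_{PC}(\rho^{UE})$ on the interior of a processor comes from (i) the sampling \eqref{eq:s} of the initial datum, which costs at most $2^{-\nu}\,T.V.(\rho_{j,0})$ in $L^1$, and (ii) the time-quantization of the incoming flux, which I would track next.

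The second step is the boundary/queue coupling, which is the genuine source of error and the part I expect to be the main obstacle. The incoming flux $f_{j,inc}$ is governed by \eqref{eq:finc}, which switches depending on the sign of $q_j(t)$, while $q_j$ itself solves the ODE \eqref{legge_coda} driven by the outgoing trace of the previous processor. In the WFT solution this switching happens at precise real times; in the UE scheme the flux $f_{j-1}^n$ fed to processor $I_j$ is a time-averaged (or floor-sampled) version, cf. \eqref{eq:fout1}--\eqref{eq:fout2}, and the queue is advanced by explicit Euler \eqref{Euler}. I would estimate the difference $|{}^{\nu}q_j(t)-\pi_{PL}(q_j^{UE})(t)|$ by a discrete Gronwall argument: the error produced in one step is $O(\Delta t_{j,\nu})$ times the number of distinct flux values crossing $b_{j-1}$ in that step, and the total number of such waves over $[0,T]$ is controlled by the BV bound proved in \cite{HKP} for the full PDE--ODE system. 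Summing, the accumulated queue error is $O(2^{-\nu})$ with a constant depending on $\sum_j T.V.(\rho_{j,0})$ and the $\mu_j$. The switching times of \eqref{eq:finc} can differ between the two solutions by at most one time step, which perturbs the outgoing flux of $I_j$ on a set of measure $O(\Delta t_{j,\nu})$ and feeds forward; because there are only $P$ processors this propagates with a constant that is still uniform in $\nu$ and $\theta$ (note $\theta$ enters only through the control $u_{\nu,\theta}$, whose total variation stays $\le C$, so all BV constants are $\theta$-independent).

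Having both estimates, the third step is to chain them along the supply chain: the $L^1$ error on $\rho_j$ and the error on $q_j$ enter, through the trace $f_{j-1}(\rho_{j-1}(b_{j-1},\cdot))$ and \eqref{eq:finc}, as a forcing for the pair $(\rho_j,q_j)$; one proves \eqref{eq:conv} by induction on $j=1,\ldots,P$, the base case $j=1$ being trivial since $f_{1,inc}=u$ is already in $\widetilde{\mathcal U}_{\Delta t_{1,\nu}}$ and the first processor carries no queue. At each stage the constant is multiplied by a factor depending only on $v_j/v_{j-1}$ and $\mu_j$, yielding the final $K\big(T.V.(\rho_{j,0}),\mu_j\big)$. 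For the last assertion — convergence of $\pi_{PC}(\rho^{UE}),\pi_{PL}(q^{UE})$ to an actual weak solution of \eqref{eq:model} with the same rate — I would invoke the standard WFT convergence theory (\cite{B}, and \cite{HKP} for the coupled model): $^{\nu,\theta}(\rho,q)^{WFT}\to(\rho,q)$ in $L^1_{loc}$ as $\nu\to\infty$ with rate $O(2^{-\nu})$, and then combine this with \eqref{eq:conv} by the triangle inequality. The whole argument is essentially the one carried out in \cite{CPR}; the only point requiring care is making every BV-type constant explicitly independent of the descent index $\theta$, which is why the uniform bound $T.V.(u_{\nu,\theta})\le C$ is used throughout.
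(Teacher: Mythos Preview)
The paper does not give its own proof of this theorem: it is quoted from \cite{CPR}, and the only indication of method is the sentence immediately following the statement, namely that ``the main idea behind Theorem~\ref{th:conv} is to use tangent vectors to estimate the distance among WFT and UE approximate solutions.'' In other words, the argument in \cite{CPR} builds a path of piecewise constant functions joining the WFT profile to $\pi_{PC}(\rho^{UE})$, equips it with generalized tangent vectors in the sense of \cite{B95,BCP}, and bounds the $L^1$ distance by integrating the tangent-vector norm along the path; the interaction estimates of Section~\ref{sec:tv} (cases \textbf{a.1.1)}--\textbf{b)}) control how this norm grows across queues. This is exactly the machinery the paper then reuses for the tangent vectors $(\zeta,\iota)$ in Theorem~\ref{th:zi}.

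Your strategy is different in spirit and, as far as I can see, correct. You bypass the Finsler metric entirely by exploiting that, under (H1) and with $\Delta t_j=\Delta x_\nu/v_j$, the Upwind update \eqref{Upwind} is the \emph{exact} shift $^{j}\rho_i^{n+1}={}^{j}\rho_{i-1}^{n}$, so the interior dynamics of WFT and UE coincide once the boundary inflows agree. All the work then moves to the queue coupling and the time-mesh mismatch \eqref{eq:fout1}--\eqref{eq:fout2}, which you handle by a Gronwall/induction-on-$j$ argument with BV control from \cite{HKP}. This is more elementary than the tangent-vector route and gives the same $O(2^{-\nu})$ rate; its drawback is that it leans hard on the linear regime (H1) and the CFL-$1$ choice, whereas the tangent-vector method is what the paper actually needs later (Theorem~\ref{th:zi}, Proposition~\ref{prop:K1}) and would survive in settings where the scheme is not exact on the interior. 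One small remark: your claim that ``the whole argument is essentially the one carried out in \cite{CPR}'' is not quite accurate --- the decomposition you propose is a direct $L^1$ comparison, not the tangent-vector construction that \cite{CPR} (as described here) uses.
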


The main idea behind Theorem \ref{th:conv} is to use tangent vectors to
estimate the distance among WFT and UE approximate solutions. In the same
fashion we are now going to estimate the distance among tangent vectors
computed via WFT and UE schemes.

In each processor $I_{j}$, the WFT wave shifts $^{\beta }\xi _{j}(t^{n})$
are approximated by the UE shifts $^{k,j}\xi _{i}^{n}$ in the following
sense. Each shift $^{\beta }\xi _{j}(t^{n})$ corresponds to one or more non
vanishing values of $^{k,j}\xi _{i}^{n}$, whose position is possibly shifted
by some amount. By splitting the shift $^{\beta }\xi _{j}(t^{n})$ in one or
more pieces, whose sum is equal to $^{\beta }\xi _{j}(t^{n})$, we can define
the difference with the UE generated shifts by tangent vectors $^{\beta
}\zeta _{j}(t^{n})$, which represent the space distance among the location
of the discontinuity $\beta $ of $\rho _{j}$ at time $t^{n}$ and the
location of the non vanishing values of $^{k,j}\xi _{i}^{n}$, i.e. $%
a_{j}+i\,\Delta x_{\nu }$.\newline
For instance in the WFT algorithm a shift $\xi _{k}$, of the time
discontinuity $\tau _{k}$, gives rise to a shift $\xi =v_{1}\xi _{k}$ on
processor $I_{1}$ at time $\tau _{k}$ of the wave generated by the
discontinuity $\tau _{k}$. Similarly the UE algorithm will generate a non
vanishing value $^{k,1}\xi _{1}^{n(k,u)}=v_{1}\xi _{k}$. Therefore, we have $%
\zeta _{1}(\tau _{k})=\zeta _{1}(n(k,u)\Delta t_{1,\nu })=0$, indeed the WFT
and UE shifts coincide.\newline
Analougsly we define the tangent vectors $\iota _{j}$ to the queue shifts $%
\eta _{j}$ generated by WFT to recover those generated by UE.\newline
We define the norm of the tangent vectors $(\zeta ,\iota )$ in the following
way:
\begin{equation}
\Vert (\zeta ,\iota )\Vert =\sum_{j}\sum_{\beta }|^{\beta }\zeta
_{j}|\,|^{\beta }\xi _{j}|\,|\Delta \rho _{j}^{\beta }|+\sum_{j}|\iota
_{j}|\,|\eta _{j}|.
\end{equation}%
In particular we have:
\begin{equation}
\Vert (\zeta ,\iota )(0)\Vert =0.  \label{eq:z0}
\end{equation}

Such tangent vectors $(\zeta ,\iota )$ evolve using the same rules of $(\xi
,\eta )$ for the Wave Front Tracking algorithm, see Section \ref{sec:tv}. In
particular their norm may increase because of interactions with queues.
Moreover, the norm may also increase because of errors produced by the use
of different time meshes among different processors, see formula (\ref%
{eq:fout1}) and (\ref{eq:fout2}). Summarizing, to estimate the increase in $%
(\zeta ,\iota )$ norm we have to consider:

\begin{itemize}
\item[i)] approximation errors occurring at interaction of waves with queues
and at time in which a queue empties;

\item[ii)] the approximation errors due to different time discretizations $%
\Delta t_{j,\nu}$.
\end{itemize}

Let us start by considering the case i) assuming there is no error
approximation because of different time meshes. Since the evolution of $%
(\zeta ,\iota )$ follows the same rules as those of $(\xi ,\eta )$ for WFT,
we can use the same estimates established in \cite{CPR}. Referring to
formulas (29), (30) and (31) of \cite{CPR}, and using the symbols $\pm $ to
indicate quantities before and after the interaction, we get:
\begin{equation}
\Vert (\zeta ,\iota )_{+}\Vert \leq \left( \max \left\{ \frac{v_{j}}{v_{j-1}}%
,1\right\} \right) \Vert (\zeta ,\iota )_{-}\Vert +v_{j}\,|\eta
_{j}^{-}|\,\Delta t_{j,\nu }\,|\Delta \rho _{j}|,  \label{eq:tepossino}
\end{equation}%
where $j$ is the queue involved in the interaction and $\Delta \rho _{j}$ is
the jump in $\rho $ of the wave exiting to processor $I_{j}$. More
precisely, the first term on the right of (\ref{eq:tepossino}) is sufficient
for the case of interaction of a wave with a queue, see also (29) and (30)
of \cite{CPR}. In case of emptying of the queue $q_{j}$, the first term
takes into account the evolution of $(\eta ,\iota )$, as described in
Section \ref{sec:tv}. The second term takes into account the additional
shifts, provoked by the fact that the WFT produces a wave in $I_{j}$ at a
time which may be not a multiple of $\Delta t_{j,\nu }$ (as it happens for
all waves of the UE algorithm), see also formula (31) of \cite{CPR}.

Regarding ii), we refer to formulas (\ref{eq:fout1}) and (\ref{eq:fout2}).
In case of $\Delta t_{j-1,\nu}>\Delta t_{j,\nu}$, then no additional error
occurs. Otherwise, the error is estimated by:
\begin{equation*}
\Vert (\zeta ,\iota )_{+}\Vert \leq \Vert (\zeta ,\iota )_{-}\Vert +v_j\,
\Delta t_{j-1,\nu }\,|\xi _{j-1}|\,|\Delta \rho _{j-1}|,
\end{equation*}%
where $\xi _{j-1}$ is the shift of the interacting wave and $\Delta
\rho_{j-1}$ is the jump in $\rho$ of the interacting wave.

Recalling (\ref{eq:z0}), from the above estimates we get the following:
\begin{equation*}
\sup_{t\in \lbrack 0,T]}\Vert (\zeta ,\iota )(t)\Vert\leq \prod_{j=2}^{P}\max \left\{ \frac{v_{j}}{v_{j-1}},1\right\}
\end{equation*}%
\begin{equation}
 \cdot
\max_{j}v_{j}\Delta t_{j,\nu }\cdot \,\sup_{t\in \lbrack 0,T]}\Vert \,^{\nu
,\theta }(\xi ,\eta )^{WFT}(t)\Vert \cdot \,\sup_{t\in \lbrack
0,T]}T.V.(\,^{\nu ,\theta }\rho ^{WFT}(t)).
\end{equation}%
Now in \cite{HKP} it was proved (Lemma 2.7) that the norm of tangent vectors
are decreasing along WFT solutions. Moreover, the tangent vectors for the
WFT solutions satisfy the same initial estimate as (\ref{eq:tv-ini}), thus
we get:
\begin{equation}\label{eq:xi-nu}
\sup_{t\in \lbrack 0,T]}\Vert \,^{\nu ,\theta }(\xi ,\eta )^{WFT}(t)\Vert
\leq \Vert \,^{\nu ,\theta }(\xi ,\eta )^{WFT}(0)\Vert =v_{1}\,\Delta
t_{1,\nu }=\Delta x_{\nu }=2^{-\nu }\Delta x.
\end{equation}%
From (2.10a) of \cite{HKP} and (\ref{eq:s}), we get:
\begin{eqnarray}
\sup_{t\in \lbrack 0,T]}T.V.(\,^{\nu ,\theta }\rho ^{WFT}(t)) &\leq
&T.V.\,(\,^{\nu ,\theta }\rho ^{WFT}(0))+\left\Vert \partial _{t}\left(
\,^{\nu ,\theta }q^{WFT}(t)\right) \right\Vert  \notag  \\
&\leq &\sum_{j}T.V.(\rho _{j,0})+\sum_{j}|\mu _{j}-\mu _{j-1}|.\label{eq:WFT-dec}
\end{eqnarray}
Since $\max_{j}v_{j}\Delta t_{j,\nu }=\Delta x_\nu=2^{-\nu}\Delta x$, we get:
\begin{theorem}
\label{th:zi} Assume that (H1), (H2), (\ref{eq:s}) hold true and that $\rho
_{j,0}$ are of bounded variation. Then the following estimate holds true:
\begin{equation*}
\sup_{t\in \lbrack 0,T]}\Vert (\zeta ,\iota )(t)\Vert
\end{equation*}
\begin{equation*}
\leq \left( 2^{-\nu
}\Delta x\right) ^{2}\ \prod_{j=2}^{P}\max \left\{ \frac{v_{j}}{v_{j-1}}%
,1\right\} \ \left( \sum_{j}T.V.(\rho _{j,0})+\sum_{j}|\mu _{j}-\mu
_{j-1}|\right) .
\end{equation*}
\end{theorem}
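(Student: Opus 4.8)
The plan is to obtain the estimate by integrating the evolution of the auxiliary tangent vectors $(\zeta ,\iota )$ from the initial condition $\Vert (\zeta ,\iota )(0)\Vert =0$ in (\ref{eq:z0}). Since, as observed above, $(\zeta ,\iota )$ is transported and updated by exactly the same rules as $(\xi ,\eta )$ along WFT solutions (Section \ref{sec:tv}), the only thing to control is the \emph{growth} of its norm, which comes from two sources: (i) approximation errors at interactions of a density wave with a queue and at emptying times of a queue; (ii) errors due to the use of different time meshes $\Delta t_{j,\nu }$ on adjacent processors. I would handle these in turn and then iterate over the $P-1$ queues.

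For source (i) the estimates of \cite{CPR} (their formulas (29)--(31)) apply verbatim, since $(\zeta ,\iota )$ obeys the WFT interaction laws; this gives the one-step inequality (\ref{eq:tepossino}), in which the norm is amplified by at most $\max\{v_{j}/v_{j-1},1\}$ and increased additively by $v_{j}\,|\eta _{j}^{-}|\,\Delta t_{j,\nu }\,|\Delta \rho _{j}|$, the extra term accounting for the fact that a WFT-generated wave on $I_{j}$ need not enter at a grid time. For source (ii), comparing (\ref{eq:fout1}) and (\ref{eq:fout2}), no error arises when $\Delta t_{j-1,\nu }>\Delta t_{j,\nu }$, while in the opposite case the averaging in (\ref{eq:fout1}) produces the additive term $v_{j}\,\Delta t_{j-1,\nu }\,|\xi _{j-1}|\,|\Delta \rho _{j-1}|$. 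Iterating (\ref{eq:tepossino}) and this second bound along the chain, the multiplicative factors assemble into $\prod_{j=2}^{P}\max\{v_{j}/v_{j-1},1\}$; every additive contribution carries one factor $v_{j}\Delta t_{j,\nu }\le \max_{j}v_{j}\Delta t_{j,\nu }=2^{-\nu }\Delta x$, each surviving queue jump $|\eta _{j}^{-}|$ or wave size $|\xi _{j-1}|$ is bounded by $\sup_{t}\Vert \,^{\nu ,\theta }(\xi ,\eta )^{WFT}(t)\Vert $, and the density jumps $|\Delta \rho |$ summed along the chain are controlled by $\sup_{t}T.V.(\,^{\nu ,\theta }\rho ^{WFT}(t))$. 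This yields the intermediate estimate displayed just before the theorem. It then remains only to substitute the two a priori bounds already recorded: by Lemma~2.7 of \cite{HKP} the WFT tangent-vector norm is non-increasing, hence $\sup_{t}\Vert \,^{\nu ,\theta }(\xi ,\eta )^{WFT}(t)\Vert \le v_{1}\Delta t_{1,\nu }=2^{-\nu }\Delta x$ as in (\ref{eq:xi-nu}); and by (2.10a) of \cite{HKP} together with the sampling (\ref{eq:s}) one has $\sup_{t}T.V.(\,^{\nu ,\theta }\rho ^{WFT}(t))\le \sum_{j}T.V.(\rho _{j,0})+\sum_{j}|\mu _{j}-\mu _{j-1}|$ as in (\ref{eq:WFT-dec}). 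Inserting these, the two independent factors of $2^{-\nu }\Delta x$ combine into $(2^{-\nu }\Delta x)^{2}$ and the claimed bound follows.

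The main obstacle is the bookkeeping in the iteration step: one must check that after propagation through all $P-1$ queues each additive error term still carries \emph{exactly one} small factor $2^{-\nu }\Delta x$ — neither accumulating higher powers of it nor losing it through the multiplicative rescalings — and that the residual jump factors can legitimately be absorbed into a single total-variation bound on the WFT density rather than growing with the number of waves. This is precisely the point at which the monotonicity and BV estimates imported from \cite{CPR} and \cite{HKP} are essential, and where care is needed in the two distinct geometric situations $\Delta t_{j-1,\nu }\lessgtr \Delta t_{j,\nu }$.
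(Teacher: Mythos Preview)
Your proposal is correct and follows essentially the same argument as the paper: the paper's proof is precisely the discussion leading up to the theorem statement, consisting of the two error sources (i)--(ii), the one-step bound (\ref{eq:tepossino}) from \cite{CPR}, the iteration producing the product $\prod_{j=2}^{P}\max\{v_{j}/v_{j-1},1\}$ together with the intermediate estimate, and the final substitution of (\ref{eq:xi-nu}) and (\ref{eq:WFT-dec}). Your additional remark on the bookkeeping obstacle is a fair gloss but does not depart from the paper's line of reasoning.
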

We are now ready to prove the following:
\begin{proposition}\label{prop:K1}
Assume that (H1), (H2), (\ref{eq:s}) hold true and that $\rho _{j,0}$ are of
bounded variation. Let $Y_{i}^{UE}$, $i=1,2$,
indicate the numerical approximation of $Y_{i}$ via the Upwind-Euler steepest descent
scheme of Section \ref{sec:nm}. Then there exists a constant $K_1$ depending
only on the data of the problem:
\[
K_1=K_1\left(\Vert \alpha _{1}\Vert_{L^{1}},Lip(\alpha_2),\|\alpha_2\|_{\infty},
Lip(\psi),\|\psi\|_{\infty},v_P,\mu_j,T.V.(\rho _{j,0})\right),
\]
where $Lip(\cdot )$ indicates the Lipschitz constant of a function,
$\|\cdot \|_{\infty}$ the $L^\infty$ norm and $j=1,\ldots,P$,
such that the following estimates hold:
\begin{equation*}
\left\| Y_{1}^{WFT}-\pi _{PC}(Y_{1}^{UE})\right\| \leq K_1\
\sup_{t\in \lbrack 0,T]}\Vert (\zeta ,\iota )(t)\Vert ,
\end{equation*}%
where $\pi _{PC}$ is the projection over piecewise constant functions
as in (\ref{eq:PC}) and
\begin{equation*}
\Vert Y_{2}^{WFT}-Y_{2}^{UE}\Vert \leq K_1\
\sup_{t\in \lbrack 0,T]}\Vert (\zeta ,\iota )(t)\Vert .
\end{equation*}%
\end{proposition}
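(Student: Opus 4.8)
The plan is to compare, term by term, the WFT expression for the gradient in Proposition~\ref{prop:Y1-Y2} with its Upwind–Euler counterpart constructed in Section~\ref{sec:nm}, bounding each discrepancy by the norm of the comparison tangent vectors $(\zeta,\iota)$ introduced before Theorem~\ref{th:zi}. For $Y_1^{WFT}=\varepsilon\sum_j\int_0^T\alpha_1(t)\,\eta_j(t)\,dt$, the UE scheme replaces the exact queue shift $\eta_j(t)$ by the piecewise-constant-in-time values $^{k,j}\eta^n$ and the integral by a left Riemann sum over the mesh $\Delta t_{j,\nu}$. First I would split the error $\|Y_1^{WFT}-\pi_{PC}(Y_1^{UE})\|$ into (a) the difference between $\eta_j(t)$ and its UE approximation, which is exactly what $\iota_j$ measures, weighted by $\alpha_1$, and (b) the quadrature error of the Riemann sum, which by the evolution rules of Section~\ref{sec:tv} is controlled once one knows $\eta_j$ is uniformly bounded (it is, by \eqref{eq:WFT-dec} and the decrease property from \cite{HKP}). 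Pulling $\|\alpha_1\|_{L^1}$ out in front, one obtains a bound $K_1\sup_t\|(\zeta,\iota)(t)\|$ with $K_1$ absorbing $\|\alpha_1\|_{L^1}$ and the total-variation/flux constants bounding $|\eta_j|$ and $|\Delta\rho_j|$. The slightly delicate point here is the special rule at a queue-emptying step (the $\tfrac12\,^{k,j}\xi_1^{n+1}\,^{k,j}\eta^n$ correction in the algorithm), but this matches exactly the WFT jump contribution at an emptying time and so is already encoded in how $(\zeta,\iota)$ propagates through case~\textbf{b)}; no new estimate is needed.

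For the second term, recall from \eqref{eq:J3new} that the $Y_2^{WFT}$ contribution of a wave $\beta$ hitting $b_P$ at time $t^\beta$ is
\[
\alpha_2(t^\beta)\,v_P\Bigl[(^\beta\rho_P^+)^2-(^\beta\rho_P^-)^2-2\psi(t^\beta)\bigl(^\beta\rho_P^+-\,^\beta\rho_P^-\bigr)\Bigr]\,^\beta\xi_P,
\]
whereas the UE update adds $\alpha_2(t^n)\,v_P\bigl[(^P\rho_{N_P}^n-\psi(t^n))^2-(^P\rho_{N_P}^{n+1}-\psi(t^n))^2\bigr]\,^{k,P}\xi_{N_P}^n$. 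I would rewrite the UE bracket algebraically as $(^P\rho_{N_P}^n)^2-(^P\rho_{N_P}^{n+1})^2-2\psi(t^n)(^P\rho_{N_P}^n-\,^P\rho_{N_P}^{n+1})$, which is visibly the discrete analogue of the WFT bracket with $^\beta\rho_P^-\leftrightarrow{}^P\rho_{N_P}^{n+1}$ and $^\beta\rho_P^+\leftrightarrow{}^P\rho_{N_P}^n$. The difference then decomposes into three pieces: (i) the error in the location/size of the shift, $|^\beta\xi_P-\sum{}^{k,P}\xi_{N_P}^n|$, which is precisely $|^\beta\zeta_P|\,|^\beta\xi_P|\,|\Delta\rho_P^\beta|$ up to the boundedness of the density bracket — here one uses $|\rho_P|\le\mu_P$ and $\|\psi\|_\infty$ to bound the bracket, and $\mathrm{Lip}(\alpha_2)$ together with the mesh size to handle the time-argument mismatch $\alpha_2(t^\beta)$ vs.\ $\alpha_2(t^n)$; (ii) the mismatch $\psi(t^\beta)$ vs.\ $\psi(t^n)$, controlled by $\mathrm{Lip}(\psi)$ times $\Delta t_{P,\nu}$, which is itself $\le\sup\|(\zeta,\iota)\|$-order by Theorem~\ref{th:zi}, or can be folded into the constant since $\Delta t_{P,\nu}$ and $\sup\|(\zeta,\iota)\|$ are comparable up to problem data; and (iii) the difference between the actual density traces $^\beta\rho_P^{\pm}$ and the UE cell values, which is again governed by the convergence Theorem~\ref{th:conv} and contributes at the same or higher order. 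Collecting, one gets $\|Y_2^{WFT}-Y_2^{UE}\|\le K_1\sup_t\|(\zeta,\iota)(t)\|$ with $K_1$ now also carrying $\|\alpha_2\|_\infty$, $\mathrm{Lip}(\alpha_2)$, $\|\psi\|_\infty$, $\mathrm{Lip}(\psi)$ and $v_P$.

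The main obstacle, I expect, is bookkeeping rather than analysis: making precise the pairing between a single WFT discontinuity $\beta$ of $\rho_P$ and the (possibly several) adjacent non-vanishing UE cells $^{k,P}\xi_{N_P}^n$, so that the ``splitting of the shift into pieces'' described before Theorem~\ref{th:zi} is used consistently when summing over $\beta$ and over $n$. Once that correspondence is fixed, every individual error term is linear in the corresponding component of $(\zeta,\iota)$ with a coefficient built from the a~priori bounds $|\rho_j|\le\mu_j$, $T.V.(\rho_{j,0})$, $\|\alpha_i\|$, $\mathrm{Lip}(\alpha_2)$, $\mathrm{Lip}(\psi)$, $\|\psi\|_\infty$, $v_P$, and the triangle inequality over the finitely many $j$ and the finitely many interaction times $t^\beta$ yields the stated estimates. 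I would close by defining $K_1$ as the maximum of the two constants arising in the $Y_1$ and $Y_2$ bounds, which has exactly the dependence claimed in the statement.
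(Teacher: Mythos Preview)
Your overall strategy---compare $Y_1^{WFT}$ and $Y_2^{WFT}$ from Proposition~\ref{prop:Y1-Y2} term by term with their UE counterparts and bound each discrepancy by $\sup_t\|(\zeta,\iota)(t)\|$---is exactly the paper's approach; the paper's proof is in fact much terser than yours, consisting of two displayed inequalities (for $Y_1$ the constant is simply $\|\alpha_1\|_{L^1}$, and for $Y_2$ the constant carries $Lip(\alpha_2)$, $\|\alpha_2\|_\infty$, $Lip(\psi)$, $\|\psi\|_\infty$, $v_P$, $\mu_P$ together with the bounds \eqref{eq:xi-nu} and \eqref{eq:WFT-dec}).

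There is, however, one point in your sketch that does not go through as written. In part~(ii) you propose to control the time-argument mismatch $|\psi(t^\beta)-\psi(t^n)|$ (and similarly for $\alpha_2$) by $Lip(\psi)\,\Delta t_{P,\nu}$ and then argue that $\Delta t_{P,\nu}$ and $\sup_t\|(\zeta,\iota)(t)\|$ are ``comparable up to problem data''. They are not: $\Delta t_{P,\nu}\sim 2^{-\nu}$ whereas Theorem~\ref{th:zi} gives $\sup_t\|(\zeta,\iota)(t)\|\lesssim (2^{-\nu})^2$, so this route would produce a constant blowing up like $2^{\nu}$. Invoking Theorem~\ref{th:zi} does not help either, since it is an \emph{upper} bound on $\|(\zeta,\iota)\|$, not a lower one. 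The correct observation---implicit in the paper's estimate---is that the temporal offset $|t^\beta-t^n|$ is itself $|{}^\beta\zeta_P|/v_P$ by the very definition of $\zeta$ (it measures the spatial displacement between the WFT discontinuity and the UE cell carrying the nonzero shift, and dividing by $v_P$ converts this to the arrival-time offset at $b_P$). Hence the $Lip(\alpha_2)$ and $Lip(\psi)$ contributions already come multiplied by $|{}^\beta\zeta_P|\,|{}^\beta\xi_P|\,|\Delta\rho_P^\beta|$ and are absorbed directly into $\sup_t\|(\zeta,\iota)(t)\|$ with a constant depending only on the listed data. With this fix your argument closes.
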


\begin{proof}
From (\ref{eq:Y1-Y2}) we have:
\begin{equation*}
\left\| Y_{1}^{WFT}-\pi _{PC}(Y_{1}^{UE})\right\| \leq \Vert \alpha _{1}\Vert
_{L^{1}}\ \sup_{t\in \lbrack 0,T]}\Vert (\zeta ,\iota )(t)\Vert ,
\end{equation*}%
and
\[
\Vert Y_{2}^{WFT}-Y_{2}^{UE}\Vert \leq
\]
\[
\left(Lip(\alpha _{2})\,v_P\,(\|\psi\|_{\infty}+2\mu_P)\,
\sup_{t\in [0,T]}\Vert \,^{\nu,\theta }(\xi ,\eta )^{WFT}(t)\Vert\,
\sup_{t\in [0,T]}T.V.(\,^{\nu ,\theta }\rho ^{WFT}(t))\right.
\]
\[
+\left.\|\alpha_2\|_{\infty}\,v_P\,Lip(\psi)\,
\sup_{t\in [0,T]}\Vert \,^{\nu,\theta }(\xi ,\eta )^{WFT}(t)\Vert\,
\sup_{t\in [0,T]}T.V.(\,^{\nu ,\theta }\rho ^{WFT}(t))\right)
\]
\[
\cdot \sup_{t\in \lbrack 0,T]}\Vert (\zeta ,\iota )(t)\Vert.
\]
By (\ref{eq:xi-nu}) and (\ref{eq:WFT-dec}) we conclude.
\end{proof}

We are now ready to state our main convergence result:
\begin{theorem}
Assume that (H1), (H2), (\ref{eq:s}) hold true and that $\rho _{j,0}$ are of
bounded variation. Fixing $\nu $, if $\delta (u_{\nu ,\theta })=\bar{\delta}$
, $h_{\theta }\geq \bar{h}>0$, for all $\theta $, and $\tau _{k}^{\nu
,\theta }\rightarrow \bar{\tau}_{k}$ as $\theta \rightarrow +\infty $, then
$u_{\nu ,\theta }$ strongly converges in $L^{1}$ to some
$\bar{u}\in {\mathcal{U}}_{C}$ as $\theta \rightarrow +\infty $ and
\begin{equation*}
\nabla _{\xi }J(\bar{u})\leq K_2\, 2^{-\nu }\,\Delta x
\end{equation*}%
where $K_2$ depends only on the data of the problem as for $K_1$
(of Proposition \ref{prop:K1}) and on $v_j$, $j=1,\ldots,P$.
\end{theorem}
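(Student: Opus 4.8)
The plan is to read the $L^{1}$ convergence directly off the hypotheses and then obtain the gradient bound by combining the stabilization of the steepest descent iteration with Proposition~\ref{prop:K1} and Theorem~\ref{th:zi}. \emph{Step 1 ($L^{1}$ convergence).} Since $\delta(u_{\nu,\theta})=\bar\delta$ is constant in $\theta$ and a steepest descent step only relocates discontinuity points without changing the jump values, each $u_{\nu,\theta}$ is the piecewise constant function determined by the positions $(\tau_{1}^{\nu,\theta},\dots,\tau_{\bar\delta}^{\nu,\theta})$ together with a fixed list of values. Taking $\bar u$ to be the piecewise constant function with those same values and discontinuities at the limits $\bar\tau_{k}$, one gets $\Vert u_{\nu,\theta}-\bar u\Vert_{L^{1}}\to 0$; in fact, since every $\tau_{k}^{\nu,\theta}$ lies in the grid $\Delta t_{1,\nu}\N$, the sequence is eventually constant. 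The iteration preserves the jump sizes, so $T.V.(u_{\nu,\theta})=T.V.(u_{\nu,0})\le C$, and coalescence of discontinuities in the limit can only decrease the total variation; hence $T.V.(\bar u)\le C$ and $\bar u\in\mathcal U_{C}$.

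\emph{Step 2 (stabilization yields a small numerical gradient).} By Step 1 there is $\theta_{0}$ with $u_{\nu,\theta}=\bar u$ for all $\theta\ge\theta_{0}$, so the update rule of Section~\ref{sec:nm} forces $\big\lfloor h_{\theta}\,G_{k}(\bar u)/\Delta t_{1,\nu}\big\rfloor=0$ for every $k$ and every $\theta\ge\theta_{0}$, where $G_{k}(\bar u)=\sum_{j}\sum_{n}{}^{k,j}Y_{1}^{n}+\sum_{n}{}^{k}Y_{2}^{n}$ is the discrete gradient component evaluated at $\bar u$ and $\Delta t_{1,\nu}=2^{-\nu}\Delta x/v_{1}$. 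Using $h_{\theta}\ge\bar h>0$ and invoking both admissible signs $\xi_{k}=\pm\Delta t$ (to make the one-sided constraint coming from the floor function two-sided), one obtains $|G_{k}(\bar u)|\le \Delta t_{1,\nu}/\bar h=2^{-\nu}\Delta x/(v_{1}\bar h)$ for each $k$, hence, summing over the $\bar\delta$ fixed components, $\Vert Y_{1}^{UE}(\bar u)+Y_{2}^{UE}(\bar u)\Vert\le(\bar\delta/(v_{1}\bar h))\,2^{-\nu}\Delta x$.

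\emph{Step 3 (transfer to the exact gradient and conclusion).} Since $\bar u\in\widetilde{\mathcal U}_{\Delta t_{1,\nu}}$ is piecewise constant and (H1) holds, WFT is exact on Riemann problems, so the tangent vector computation is exact and Proposition~\ref{prop:Y1-Y2} gives $\nabla_{\xi}J(\bar u)=Y_{1}^{WFT}(\bar u)+Y_{2}^{WFT}(\bar u)$. Applying Proposition~\ref{prop:K1} at level $\nu$ to the control $\bar u$ and then Theorem~\ref{th:zi},
\[
\big\Vert \nabla_{\xi}J(\bar u)-\big(Y_{1}^{UE}(\bar u)+Y_{2}^{UE}(\bar u)\big)\big\Vert\le 2K_{1}\,\sup_{t\in[0,T]}\Vert(\zeta,\iota)(t)\Vert\le C_{1}\,(2^{-\nu}\Delta x)^{2},
\]
with $C_{1}$ depending only on $K_{1}$, the $v_{j}$, the $\mu_{j}$ and $T.V.(\rho_{j,0})$. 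Combining with Step 2 and using $2^{-\nu}\Delta x\le\Delta x$ to absorb the quadratic term,
\[
\nabla_{\xi}J(\bar u)\le\frac{\bar\delta}{v_{1}\bar h}\,2^{-\nu}\Delta x+C_{1}(2^{-\nu}\Delta x)^{2}\le K_{2}\,2^{-\nu}\Delta x,
\]
where $K_{2}$ collects $\bar h^{-1}$, $\Delta x$, the velocities $v_{j}$ and the data entering $K_{1}$.

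\emph{Expected main obstacle.} The delicate step is Step 2: turning ``the iteration no longer moves $\tau_{k}$'' into a quantitative bound on the discrete gradient requires care with the floor function, in particular using both signs $\xi_{k}=\pm\Delta t$ to get a two-sided estimate, and identifying the quantity $\sum_{j}\sum_{n}{}^{k,j}Y_{1}^{n}+\sum_{n}{}^{k}Y_{2}^{n}$ used by the algorithm with the object $\pi_{PC}(Y_{1}^{UE})+Y_{2}^{UE}$ estimated in Proposition~\ref{prop:K1}. A minor point to verify is that Proposition~\ref{prop:K1} and Theorem~\ref{th:zi}, stated for generic discretization data, apply verbatim to $\bar u$, which is legitimate precisely because $\bar u$ lies on the grid $\widetilde{\mathcal U}_{\Delta t_{1,\nu}}$ and coincides with $u_{\nu,\theta}$ for $\theta$ large.
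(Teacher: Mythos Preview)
Your argument follows the same skeleton as the paper's proof: establish $L^{1}$ convergence of $u_{\nu,\theta}$, bound the numerical gradient $Y^{UE}$ by $\Delta t_{1,\nu}$, then transfer to $\nabla_{\xi}J(\bar u)=Y_{1}^{WFT}+Y_{2}^{WFT}$ via Proposition~\ref{prop:K1} and Theorem~\ref{th:zi}. Your Step~1 is in fact sharper than the paper's: by observing that the $\tau_{k}^{\nu,\theta}$ live on the discrete grid $\Delta t_{1,\nu}\N$ and hence must be \emph{eventually constant} once they converge, you bypass the paper's appeal to Helly's theorem and make the floor-function argument in Step~2 fully explicit, whereas the paper only writes ``since $\tau_{k}^{\nu,\theta}\to\bar\tau_{k}$ we have that $Y_{1}^{UE}$ and $Y_{2}^{UE}$ are bounded by $\Delta t_{1,\nu}$'' without spelling out why.

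Two minor remarks. First, the ``both signs'' device in Step~2 is not actually needed: $\lfloor h_{\theta}G_{k}/\Delta t_{1,\nu}\rfloor=0$ already forces $0\le h_{\theta}G_{k}/\Delta t_{1,\nu}<1$, hence $|G_{k}|<\Delta t_{1,\nu}/\bar h$ directly. Second, your $K_{2}$ carries an explicit dependence on $\bar h^{-1}$ and $\bar\delta$ that the paper's statement does not list; this dependence is, however, implicit in the paper's own argument (which uses the same stabilization mechanism), so this is a cosmetic discrepancy rather than an error.
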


\begin{proof}
Clearly $u_{\nu ,\theta }$ strongly converges in $L^{1}$ to some $\bar{u}$,
moreover by Helly theorem $\bar{u}\in {\mathcal{U}}_{C}$.\newline
Now from Proposition \ref{prop:Y1-Y2}, we have that $\nabla _{\xi }J(u_{\nu
,\theta })=Y_{1}^{WFT}+Y_{2}^{WFT}$. The tangent vectors $^{\nu ,\theta
}(\xi ,\eta )^{WFT}$ converge as $\theta \rightarrow +\infty $, in the sense
that the positions and values of shifts converge. Then we get that $\nabla
_{\xi }J(u_{\nu ,\theta })$ converges to $\nabla _{\xi }J(\bar{u})$.\newline
Now since $\tau _{k}^{\nu ,\theta }\rightarrow \bar{\tau}_{k}$ we have that
$Y_{1}^{UE}$ and $Y_{2}^{UE}$ are bounded by $\Delta t_{1,\nu }$
(times a constant depending only on the data of the problem as for $K_1$).
Thus we conclude by Theorem \ref{th:zi} and Proposition \ref{prop:K1}.
\end{proof}

\section{Simulations}
\label{sec:s}

In this Section we test the Euler-Upwind steepest descent algorithm
on two test cases.

Consider first a supply chain characterized by $11$ arcs with the following input
function:
\begin{equation*}
u\left( t\right) =\left\{
\begin{tabular}{ll}
$u_{1}$ & $0\leq t\leq \tau _{1},$ \\
$u_{2}$ & $\tau _{1}<t\leq \tau _{2},$ \\
$u_{3}$ & $\tau _{2}<t\leq T.$
\end{tabular}%
\right.
\end{equation*}%
We assume that the supply chain is initially empty and set $v_{j}$ $=1$ $\forall
j=1,...,11$ and
\begin{equation*}
\mu _{1}=200,\text{ }\mu _{2}=75,\text{ }\mu _{3}=100,\text{ }\mu _{4}=65,%
\text{ }\mu _{5}=150,
\end{equation*}%
\begin{equation*}
\mu _{6}=75,\text{ }\mu _{7}=30,\text{ }\mu _{8}=100,\text{ }\mu _{9}=80,%
\text{ }\mu _{10}=100,\text{ }\mu _{11}=120.
\end{equation*}%
For simplicity we also set $\alpha_1\equiv 1$ and $\alpha_2\equiv 0$
and analyze two different cases:
\begin{description}
\item[Case a)] $u_{1}=90,u_{2}=100,u_{3}=125,T=10,$ initial values $(\tau _{1},\tau _{2})=(1,3);$
\item[Case b)] $u_{1}=100,u_{2}=90,u_{3}=125,T=10,$ initial values $(\tau _{1},\tau _{2})=(4,5).$
\end{description}
As time and space grid meshes we choose $\Delta x=0.02$ and $\Delta t=0.016$
so as to satisfy the CFL condition. We use the condition that $J$ remains
unchanged for five runnings of the algorithm as forced stop criterion. The
times $\tau _{1}$ and $\tau _{2}$ found by the algorithm are:
\begin{description}
\item[Case a)] $\tau _{1}\simeq 8.98,$ $\tau _{2}\simeq 9.14$: as expected
both $\tau _{1}$ and $\tau _{2}$ run toward $T$; in fact, in order to
minimize the queues, the optimization algorithm tends to reduce the inflow
levels which increase the queues (i. e. $u_{2}$ and $u_{3}$).

\item[Case b)] $\tau _{1}=0,$ $\tau _{2}\simeq 9.05$: $\tau _{1}$ runs to
zero and $\tau _{2}$ runs toward $T$; as in the previous case, the
optimization algorithm works to reduce the inflow levels which lead to
queues increasing (i. e. $u_{1}$ and $u_{3}$).
\end{description}
In Table \ref{Table1} we report the numerical values of $\tau _{1}$,
$\tau _{2}$ and  $J$ at each iteration of the
steepest descent algorithm for Case a).

\begin{table}[tbph]
\caption{Case $a$: $J$ versus $\protect\tau _{1}$, $\protect\tau _{2}$
in $42$ iterations of the steepest descent algorithm.}
\label{Table1}
\begin{tabular}[t]{llll}
\hline\noalign{\smallskip}
Iteration & $\tau _{1}$ & $\tau _{2}$ & $J_{1}$ \\
1 & 1 & 3 & 117799.059 \\
2 & 2.595 & 5.988 & 89474.759 \\
3 & 3.870 & 7.480 & 79616.859 \\
4 & 4.893 & 8.228 & 75291.519 \\
5 & 5.710 & 8.600 & 73053.659 \\
6 & 6.365 & 8.788 & 71737.0579 \\
7 & 6.888 & 8.880 & 70923.172 \\
8 & 7.306 & 8.926 & 70419.523 \\
9 & 7.640 & 8.957 & 70092.102 \\
10 & 7.907 & 8.977 & 69879.287 \\
11 & 8.120 & 8.987 & 69749.807 \\
12 & 8.291 & 8.998 & 69660.449 \\
13 & 8.427 & 9.005 & 69608.289 \\
14 & 8.538 & 9.012 & 69570.370 \\
15 & 8.625 & 9.017 & 69544.690 \\
16 & 8.694 & 9.022 & 69530.770 \\
17 & 8.748 & 9.028 & 69521.531 \\
18 & 8.790 & 9.035 & 69514.2101 \\
19 & 8.818 & 9.047 & 69509.809 \\
20 & 8.839 & 9.058 & 69507.807 \\
21 & 8.858 & 9.066 & 69506.164 \\
\noalign{\smallskip}\hline
\end{tabular}%
\begin{tabular}[t]{llll}
\hline\noalign{\smallskip}
Iteration & $\tau _{1}$ & $\tau _{2}$ & $J_{1}$ \\
22 & 8.873 & 9.076 & 69504.563 \\
23 & 8.887 & 9.084 & 69503.235 \\
24 & 8.897 & 9.093 & 69501.100 \\
25 & 8.906 & 9.099 & 69501.100 \\
26 & 8.915 & 9.105 & 69500.948 \\
27 & 8.923 & 9.110 & 69500.948 \\
28 & 8.931 & 9.115 & 69500.073 \\
29 & 8.937 & 9.119 & 69500.073 \\
30 & 8.942 & 9.124 & 69500.061 \\
31 & 8.949 & 9.1276 & 69499.327 \\
32 & 8.954 & 9.130 & 69499.327 \\
33 & 8.959 & 9.134 & 69499.327 \\
34 & 8.964 & 9.137 & 69498.722 \\
35 & 8.968 & 9.139 & 69498.722 \\
36 & 8.972 & 9.140 & 69498.722 \\
37 & 8.976 & 8.972 & 69498.245 \\
38 & 8.978 & 9.144 & 69498.245 \\
39 & 8.981 & 9.146 & 69498.245 \\
40 & 8.984 & 9.147 & 69498.245 \\
41 & 8.986 & 9.149 & 69498.245 \\
42 & 8.989 & 9.150 & 69498.245 \\
\noalign{\smallskip}\hline
\end{tabular}%
\end{table}

Figure \ref{Fig_1}, respectively \ref{Fig_2}, depicts the values assumed by $J$,
respectively $(\tau _{1},$ $\tau _{2})$,
at each iteration step for Case a).

\begin{figure}[tbph]
\includegraphics[width=3.2in]{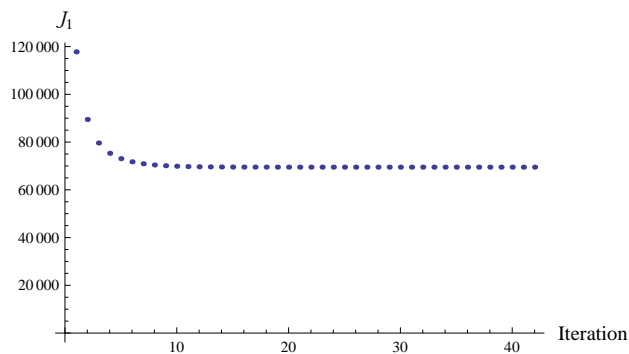}
\caption{Supply chain with $11$ arcs, Case $a$. $J$ versus iteration steps.}
\label{Fig_1}
\end{figure}

\begin{figure}[tbph]
\includegraphics[width=3in]{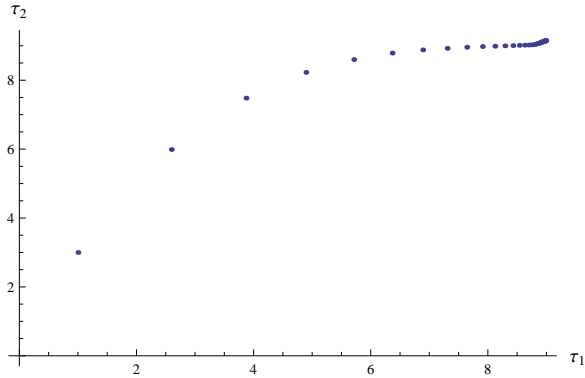}
\caption{Supply chain with $11$ arcs, Case $a$. Path followed
by the steepest descent algorithm in the plane ($\protect\tau _{1},\protect\tau _{2}$).}
\label{Fig_2}
\end{figure}
The behaviour of the cost functional $J$ in the plane $(\tau _{1},\tau _{2})$, is
reported for Case b) in Figure \ref{Fig_3}, to confirm the goodness of the
steepest descent algorithm. Notice that since $J$ decreases when the
number of iteration increases, the updating of $\tau _{1}$ and $\tau _{2}$
values allows an effective decrement of queues at supply chain nodes.
\begin{figure}[tbph]
\includegraphics[width=2.9in]{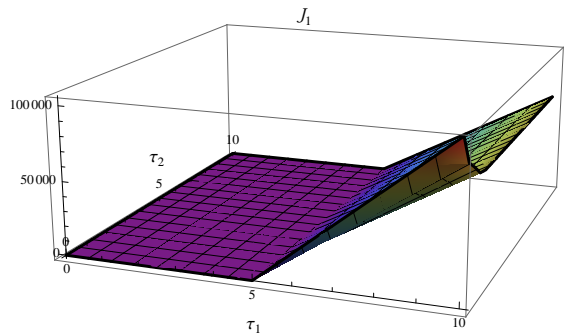}
\caption{Supply chain with $11$ arcs, Case $b$. Behaviour of $J_{1}$ in the
plane $(\protect\tau _{1},\protect\tau _{2})$. }
\label{Fig_3}
\end{figure}

We now analyze now a supply chain with $2$ arcs, maximal processing rates $\mu
_{1}=200,\mu _{2}=75$ and lengths and processing rates of each processor
equal to $1$. We assume that processors and queues are empty at $t=0$, i.e. $%
\rho _{j,0}\left( x\right) =0,$ $\forall $ $x\in \left[ 0,1\right] ,$ $j=1,2$%
, $q_{1}(0)=0$. The levels of the input flow are $u_{1}=100$, $u_{2}=80$, $%
u_{3}=50$. The total simulation time is $T=20$ and numerical approximations
are made with $\Delta x=0.02,\Delta t=0.016$. The aim is to optimize $%
J=J_{1}+J_{2},$with $\alpha _{1}\equiv\alpha _{2}\equiv0.5$, i.e. minimize the queue
handling a pre-assigned piecewise constant outflow:

\begin{equation*}
\bar{\psi}=\psi (t)=\mathbf{\ }\left\{
\begin{tabular}{ll}
$100$ & $0\leq t\leq 10,$ \\
$75$ & $10<t\leq T.$%
\end{tabular}%
\right.
\end{equation*}%
Figures \ref{Fig_4} and \ref{Fig_5} show the values assumed by $J$ at each iteration step
and the \textquotedblleft path" followed by the steepest descent algorithm
in the plane $\left( \tau _{1},\tau _{2}\right) $, starting with initial
searching point $(\tau _{1},\tau _{2})=(5,12)$. We observe that according to
the aim of minimizing the queue, $J$\ is a decreasing function and,
moreover, as expected the flux on the last arc is equal to the final outflow
value. Finally $J$ is minimized (its value is zero) by $\left(
t_{1},t_{2}\right) =\left( 0,0\right) $.

\begin{figure}[tbph]
\includegraphics[width=3.2in]{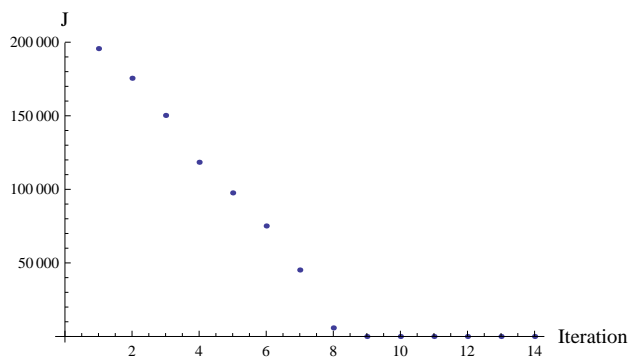} %
\caption{Supply chain with $2$ arcs: $J$ versus iteration steps.}
\label{Fig_4}
\end{figure}

\begin{figure}[tbph]
\includegraphics[width=3in]{}
\caption{Supply chain with $2$ arcs: \textquotedblleft path\textquotedblright\ followed by the steepest
descent algorithm in the plane $(\protect\tau _{1},\protect\tau _{2})$.}
\label{Fig_5}
\end{figure}




\end{document}